\documentclass[12pt]{amsart}

\usepackage{amsmath,amscd,amsfonts,amsthm,amssymb}
\usepackage[shortlabels]{enumitem}
\usepackage{amscd}
\usepackage[all,cmtip]{xy}
\usepackage[normalem]{ulem}
\usepackage[boxsize=8pt]{ytableau}
\usepackage[left=1in,top=1in,right=1in,bottom=1in
]{geometry}
\usepackage{graphicx}
\usepackage{tikz}
\usepackage{stmaryrd}
\usepackage{mathrsfs}
\usepackage{cite}
\usepackage{mathtools}
\usetikzlibrary{matrix,calc}
\usepackage{hyperref}

\newtheorem{thm}{Theorem}[section]
\newtheorem{lem}[thm]{Lemma}
\newtheorem{cor}[thm]{Corollary}

\newtheorem{prop}[thm]{Proposition}
\newtheorem{obs}[thm]{Observation}

\theoremstyle{definition}
\newtheorem{defn}[thm]{Definition}

\theoremstyle{remark}
\newtheorem{rem}[thm]{Remark}

\newtheorem{ex}[thm]{Example}
\newtheorem{cons}[thm]{Construction}

\newcommand{\bi}{\begin{itemize}}
\newcommand{\ei}{\end{itemize}}
\newcommand{\ben}{\begin{enumerate}}
\newcommand{\een}{\end{enumerate}}
\newcommand{\be}{\begin{equation}}
\newcommand{\ee}{\end{equation}}
\newcommand{\bea}{\begin{eqnarray}}
\newcommand{\eea}{\end{eqnarray}}
\newcommand{\bal}{\begin{align}}
\newcommand{\eal}{\end{align}}
\newcommand{\ba}{\begin{array}}
\newcommand{\ea}{\end{array}}

\newcommand{\schur}[1]{\scalebox{1.6}{$\mathbb{S}$}_{\ydiagram{3,2,1,1}}}

\newcommand{\catname}[1]{{\normalfont\textbf{#1}}}
\newcommand{\mop}[1]{{\normalfont\text{#1}}}

\newcommand{\Fin}{\catname{FinSet}}

\newcommand{\RMod}{\catname{Mod$_{k}$}}

\newcommand{\Rep}{\catname{Rep}_k(\mathcal{D})}

\newcommand{\Ch}{\mathcal{C}\hspace{-1pt}h}

\newcommand{\Ob}{\mop{Ob}}
\newcommand{\Hom}{\mop{Hom}}

\newcommand{\Ext}{\mop{Ext}}

\newcommand{\End}{\mop{End}}

\newcommand{\rann}[2]{\prescript{}{#1}{#2}}
\newcommand{\lann}[2]{{#1}_{#2}}

\newcommand{\im}{\mop{Im}}

\newcommand{\Lan}{\mop{Lan}}

\newcommand{\Res}{\mop{Res}}

\makeatletter
\makeatother

\title{Categories of Dimension Zero}

\author{John D. Wiltshire-Gordon}
\address{\scriptsize UW-Madison Department of Mathematics;
Van Vleck Hall;
480 Lincoln Drive;
Madison, WI  53706}
\email{jwiltshiregordon@gmail.com}

\subjclass[2010]{Primary 18A25; Secondary 16G10}

\keywords{finite length category, representations of categories, homological modulus}

\date{June 28, 2017}

\begin{document}
\maketitle
\begin{abstract}
If $\mathcal{D}$ is a category and $k$ is a commutative ring, the functors from $\mathcal{D}$ to $\RMod$ can be thought of as representations of $\mathcal{D}$.  By definition, $\mathcal{D}$ is dimension zero over $k$ if its finitely generated representations have finite length.  We characterize categories of dimension zero in terms of the existence of a ``homological modulus'' (Definition \ref{defn:hommod}) which is combinatorial and linear-algebraic in nature.  
\end{abstract}

\section{Introduction}
A ring is called \textbf{dimension zero} if its finitely generated modules have finite length.  A fundamental example is the group algebra $kG$ of a finite group.  Modules for the group algebra are representations of $G$, and being dimension zero means that any representation that can be $G$-spanned by finitely many vectors has a finite composition series.  A group $G$ is called \textbf{dimension zero over a commutative ring $k$} if the ring $kG$ is dimension zero.

It is natural to ask if the condition that $G$ be finite is strictly necessary; perhaps there are some infinite groups that are dimension zero.  There are not, as was shown by Connell:

\begin{thm}[Connell \cite{Connell63}]
A group $G$ is dimension zero over a commutative ring $k$ if and only if $k$ is Artinian and $G$ is finite.
\end{thm}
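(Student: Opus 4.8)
The plan is to prove both directions of the equivalence, treating the ``easy'' direction first as a sanity check before attacking the substantive converse.

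\textbf{The easy direction ($k$ Artinian and $G$ finite $\implies$ $G$ dimension zero).} If $k$ is Artinian and $G$ is finite, then $kG$ is a finitely generated $k$-module, hence a (left and right) Artinian ring. Over an Artinian ring every finitely generated module has finite length, by the usual Noetherian-plus-Artinian argument: a f.g.\ module over an Artinian ring is both Noetherian and Artinian, and a module that is both satisfies ACC and DCC on submodules, so a maximal chain of submodules exists and is finite, giving a composition series. Thus $kG$ is dimension zero.

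\textbf{The hard direction ($G$ dimension zero $\implies$ $k$ Artinian and $G$ finite).} First I would extract that $k$ must be Artinian: the regular module ${}_{kG}kG$ restricts to a free $k$-module, and if $kG$-submodules have bounded length then in particular $k$-submodules that happen to be $kG$-submodules are controlled; more directly, $k$ itself is a quotient of $kG$ (via the augmentation $kG \to k$), so $k$ is dimension zero as a ring, i.e.\ Artinian. (If $G$ were already known finite this is immediate; in general one uses that a f.g.\ $k$-module pulled back along $kG \to k$ is a f.g.\ $kG$-module of the same length, so $k$ inherits dimension zero.) The real content is finiteness of $G$. Suppose $G$ is infinite. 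The idea is to build a finitely generated $kG$-module with an infinite descending (or ascending) chain of submodules. The natural candidate is the augmentation ideal $I = \ker(kG \to k)$, which is generated as a left ideal by $\{g - 1 : g \in G\}$ but in fact by a \emph{single} element exactly when $G$ is cyclic --- the finite-generation subtlety is the crux. Connell's actual argument proceeds by showing that if $kG$ is (left) Noetherian then $G$ has max condition on subgroups and is finitely generated, and then a separate argument using the structure of $kG$ forces finiteness; for the dimension-zero statement one leverages that finite length is strictly stronger than Noetherian.

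\textbf{Key steps for the converse, in order.} (1) Reduce to $k$ a field, or even to the case $k = \mathbb{Z}/p$ or $k=\mathbb{Q}$, by tensoring: if $kG$ is dimension zero and $\mathbb{F}$ is a residue field of $k$, then $\mathbb{F}G$ is a quotient ring of $kG$, hence dimension zero; so it suffices to show $\mathbb{F}G$ dimension zero forces $G$ finite, for $\mathbb{F}$ a field. (2) Observe that dimension zero implies (left) Noetherian for $kG$: finite-length modules certainly satisfy ACC, and f.g.\ submodules of f.g.\ modules are f.g., giving the Noetherian property on f.g.\ modules, in particular $kG$ is a Noetherian ring. (3) Now invoke the subgroup-theoretic consequences: if $kG$ is left Noetherian then every subgroup of $G$ is finitely generated (else an infinite strictly ascending chain of subgroups $H_1 < H_2 < \cdots$ yields an infinite ascending chain of left ideals $I(H_1)kG \subsetneq I(H_2)kG \subsetneq \cdots$, where $I(H)$ is the augmentation ideal of $kH$). (4) The decisive step: show that if $G$ is infinite but every subgroup is f.g., there is still a f.g.\ $kG$-module of infinite length --- e.g.\ by producing an infinite strictly \emph{descending} chain of ideals inside $kG$ itself, using an element of infinite order (if $G$ has torsion-free part, $kG \supseteq k[t,t^{-1}]$, a PID of Krull dimension one, which is \emph{not} dimension zero, and one transfers a bad module up), or, in the torsion case, using that an infinite locally finite group has infinite-dimensional irreducible-free quotients.

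\textbf{Main obstacle.} I expect step (4) to be the crux: ruling out exotic infinite groups (infinite torsion groups all of whose proper subgroups are finite, infinite locally finite groups, etc.) where the obvious commutative-subring trick $k[t,t^{-1}] \hookrightarrow kG$ is unavailable. Connell's resolution is to pass through the statement ``$kG$ Noetherian $\implies$ $G$ has a normal f.g.\ subgroup of finite index'' combined with a length/rank count; making that count produce an actual infinite-length f.g.\ module (rather than merely contradicting Noetherianity) is where care is needed. A clean alternative I would try: show directly that $kG$ dimension zero forces $\dim_k kG < \infty$ by exhibiting $kG$ itself as a f.g.\ module over its center or over $k$, and then read off $|G| < \infty$; but justifying that reduction seems to require exactly the finiteness one is trying to prove, so the honest route is the chain-of-subgroups plus structure-theory argument above.
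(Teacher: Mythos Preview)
The paper does not prove this statement. Connell's theorem is quoted in the introduction as a known result from \cite{Connell63}, alongside Zel$'$manov's monoid generalization, purely as motivation; no argument is supplied. In fact, the paper's own main result (Theorem~\ref{thm:detection}) specializes to Connell's theorem when $\mathcal{D}$ is a one-object groupoid, but the proof of that theorem \emph{invokes} Zel$'$manov's result rather than reproving it, so even indirectly the paper does not furnish an independent proof of the group case.

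That said, your outline is a reasonable reconstruction of how such a proof goes, and steps (1)--(3) are essentially correct: the augmentation $kG\to k$ shows $k$ is Artinian; Hopkins--Levitzki (or the finite-length hypothesis directly) gives Noetherianity; and the ascending-chain-of-augmentation-ideals argument shows every subgroup of $G$ is finitely generated. But you correctly identify step (4) as the real content and do not complete it. The suggestion to embed $k[t,t^{-1}]$ when $G$ has an element of infinite order is on the right track---one produces a strictly descending chain of left ideals $kG(1-g)\supsetneq kG(1-g)^2\supsetneq\cdots$---but your handling of the torsion case is only a gesture (``infinite locally finite group has infinite-dimensional irreducible-free quotients'' is not a usable statement as written, and Tarski-monster-type groups are genuinely delicate here). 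Connell's own argument, and the cleaner later proofs, avoid this case split by a more direct length count: once $k$ is Artinian one reduces to $k$ a field, observes that a left Artinian $k$-algebra has finite length as a module over itself, and then shows that the regular representation $kG$ having finite length forces $\dim_k kG<\infty$ via the fact that distinct group elements are $k$-linearly independent. Your ``clean alternative'' paragraph circles this idea but dismisses it prematurely; filling in why finite composition length of $_{kG}kG$ bounds $\dim_k kG$ (each composition factor is a simple module over the Artinian ring $kG/J$, hence finite-dimensional over $k$) is exactly what closes the gap.
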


A generalization of this result due to Zel$'$manov characterizes monoids of dimension zero.  (Recall that a monoid is a group without inverses, or equivalently, a semigroup with identity.)

\begin{thm}[Zel$'$manov \cite{Zelmanov}]
A monoid $M$ is dimension zero over a commutative~ring~$k$ if and only if $k$ is Artinian and $M$ is finite.
\end{thm}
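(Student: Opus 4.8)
The plan is to treat the two directions separately, reducing the forward implication to Connell's theorem.

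The ``if'' direction is routine: if $k$ is Artinian and $M$ is finite then $kM$ is finitely generated (indeed free) as a $k$-module, so every finitely generated $kM$-module is finitely generated over $k$, hence has finite length over $k$ because $k$ does over itself (an Artinian ring being also Noetherian), hence has finite length over $kM$, since a chain of $kM$-submodules is in particular a chain of $k$-submodules. For the converse, I would first record the elementary fact that for a ring $R$, being dimension zero is equivalent to being left Artinian: a left Artinian ring has finite length over itself, so every finitely generated $R$-module does, while conversely taking $R$ as a module over itself shows $R$ is Artinian. So suppose $kM$ is left Artinian. The augmentation $\varepsilon\colon kM\to k$ with $\varepsilon(m)=1$ for all $m\in M$ presents $k$ as a cyclic $kM$-module whose $kM$-submodules are exactly its ideals; being finitely generated it has finite length, so $k$ is Artinian. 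Passing from $k$ to a residue field $k/\mathfrak m$ only shrinks the class of modules, so $(k/\mathfrak m)M$ is again dimension zero; it thus suffices to show that $M$ is finite when $k$ is a field, which I assume from now on.

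The next step is a ``descent'' lemma: \emph{if $N$ is any monoid with $kN$ left Artinian, then its group of units $G$ is finite.} Decompose $kN=kG\oplus k[N\setminus G]$ as left $kG$-modules; this is legitimate because a unit times a non-unit is a non-unit, so $G$ and $N\setminus G$ are each stable under left multiplication by $G$, and for the same reason $N\setminus G$ is stable under right multiplication by $G$. A short calculation then gives $(kN)L\cap kG=L$ for every left ideal $L\subseteq kG$, so $L\mapsto(kN)L$ sends a strictly descending chain of left ideals of $kG$ to a strictly descending chain of left ideals of $kN$. Hence $kG$ is left Artinian, so dimension zero, so $G$ is finite by Connell's theorem (using that $k$ is a field, hence Artinian). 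Applying this to the local submonoids $eMe$ for idempotents $e\in M$ --- each of whose algebras is the corner $e(kM)e$, again left Artinian by the same style of argument --- shows that every maximal subgroup of $M$ is finite.

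Finally I would assemble finiteness of $M$ via Green's relations. Being left Artinian, $kM$ is also left Noetherian, so $M$ has both chain conditions on its left and two-sided ideals; since an infinite antichain among the $\mathcal J$-classes (or among the $\mathcal L$-classes) would produce a strictly ascending chain of ideals of $M$, and a poset with both chain conditions and no infinite antichain is finite, $M$ has only finitely many $\mathcal J$-classes and finitely many $\mathcal L$-classes. For each regular $\mathcal J$-class the principal factor is, by Rees's theorem, a Rees matrix semigroup over the maximal subgroup $G_J$ (now known finite); its contracted algebra arises as a subquotient of $kM$, hence is left Artinian, which fails of a ``left-zero-type'' configuration with infinitely many $\mathcal R$-classes --- so the $\mathcal R$-classes in each $\mathcal J$-class are also finite in number, each $\mathcal H$-class has cardinality $|G_J|<\infty$, and the nil contribution of a non-regular $\mathcal J$-class is controlled by the same chain conditions. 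Thus each $\mathcal J$-class is finite, and there are finitely many, so $M$ is finite. I expect the real obstacle to lie exactly here: making rigorous that the principal factors inherit left Artinianness as non-unital subquotients, and --- since only a left-sided hypothesis is available --- pinning down the number of $\mathcal R$-classes and handling the non-regular $\mathcal J$-classes. The reductions to Connell's theorem, by contrast, are formal.
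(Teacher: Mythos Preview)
The paper does not contain a proof of this statement. Zel$'$manov's theorem is quoted in the introduction as a known result from \cite{Zelmanov}, and later in the proof of Theorem~\ref{thm:detection} it is invoked as a black box (with a pointer to Okni\'nski's book \cite[p.~172, Theorem~23]{SemigroupAlgebras} for an English account). There is therefore no ``paper's own proof'' to compare your attempt against.

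That said, a brief assessment of your proposal on its own terms: the easy direction and the reductions are fine. The argument that $k$ is Artinian via the augmentation module, the passage to a residue field, the embedding of the left-ideal lattice of $kG$ into that of $kN$ via $L\mapsto (kN)L$ (using that non-units are closed under one-sided multiplication by units), and the corner trick $k[eMe]\cong e(kM)e$ to reach maximal subgroups are all sound, and they do reduce the problem to Connell's theorem for the group-theoretic part.

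The genuine gap is exactly where you flag it. The last paragraph is a plausible outline but not a proof. Two specific issues: (i) your antichain argument handles $\mathcal J$-classes but does not by itself bound the number of $\mathcal R$-classes inside a fixed $\mathcal J$-class from a purely left-sided hypothesis, and (ii) showing that the contracted algebra of each principal factor inherits left Artinianness, and then extracting finiteness of that Rees matrix semigroup (including the non-regular case), requires real work---this is essentially the content of Zel$'$manov's argument. In the standard treatments (e.g.\ Okni\'nski) this step passes through structural results on Artinian semigroup algebras (nilpotency of the radical, analysis of $0$-simple pieces) rather than a direct chain-condition count. So your reductions are correct and efficient, but the closing paragraph would need to be replaced by a citation or by the substantive semigroup-algebra argument.
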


A group, or a monoid, comes with a composition law, whose properties abstract the usual composition of functions.
Specifically, groups model invertible self-functions (permutations) and monoids model arbitrary self-functions (possibly non-invertible).
Without doubt, however, the algebraic structure that best abstracts the notion of function composition is that of a category.

Theorem \ref{thm:detection} contributes a characterization of categories of dimension zero.  Strikingly, and in contrast to the cases of groups and monoids, such categories may be infinite.

\subsection{Representations of a category}
A category has objects and arrows between them.  We think of the arrows as being like the elements of a group or a monoid: they can be composed, the composition law is associative, and there are identities.  Concretely, a representation of a category $\mathcal{D}$ over a field $k$ is an assignment of a rectangular $k$-matrix $V(f)$ to every arrow $f$ in $\mathcal{D}$ so that
$$
f \circ g = h \;\;\; \implies \;\;\; V(f) \circ V(g) = V(h)
$$
and the identity arrows are assigned to identity matrices of various sizes.  Just as a representation of $G$ converts abstract symmetries to linear symmetries, a representation of $\mathcal{D}$ converts abstract transformations to linear transformations.

\subsection{Avatars for the representation theory of a category}
The representation theory of categories is a broad and flexible subject, going by many names, and with many possible focuses.  For example, if $\mathcal{D} = \mathcal{C}^{op}$ then a representation of $\mathcal{D}$ is a presheaf of $k$-vector spaces on $\mathcal{C}$.  Or, taking $\mathcal{D}$ to be the category freely generated by the arrows of a finite directed graph, a representation is the same as a quiver representation.

In commutative algebra, many notions of ``graded module'' may be recast in terms of the representation theory of a category whose objects correspond to grades, and whose arrows correspond to structure maps.  For example, a representation of the poset category $(\mathbb{N}, \leq)$ is the same as a nonnegatively-graded $k[T]$-module.

The homological algebra present in a category of representations goes by the name ``functor homology.''  The topic is expansive, with connections to homotopy theory and the cohomology of classical and algebraic groups.  For an introduction, see \cite{FunctorHomology}.

This author's contact with the subject comes from the representation stability phenomena of Church-Farb and Church-Ellenberg-Farb \cite{ChurchFarbRTHS, CEF}.  In this direction, the prototypical category of study is $\mathcal{D}=\mathrm{FI}$, the category of finite sets and injections.

\subsection{Definitions}
Given a commutative ring $k$ and a category $\mathcal{D}$, a \textbf{representation} of $\mathcal{D}$ is a functor $V : \mathcal{D} \longrightarrow \RMod$ to the category of $k$-modules.  A map between representations $V \longrightarrow W$ is a natural transformation.  Kernels and images of maps are formed objectwise, and so the category of representations $\catname{Rep}_k(\mathcal{D})$ is abelian.

We say $V$ is \textbf{generated in degrees $\{d_i\}_{i \in I}$} for some collection of objects $d_i \in \mathcal{D}$ if any subrepresentation $W \subseteq V$ with $W(d_i) = V(d_i)$ has $W=V$.  We say $V$ is \textbf{finitely generated} if it is generated in degrees $\{d_1, \ldots, d_l\}$ and each $V(d_i)$ is finitely generated as a $k$-module.  Concretely, if $V$ is finitely generated, then there are finitely many $v_i \in Vd_i$ spanning $V$ using scalars from $k$ and arrows from $\mathcal{D}$.  We say $V$ has \textbf{finite length} if it satisfies both the ascending and descending chain conditions on subrepresentations.  A nonempty category $\mathcal{D}$ is \textbf{dimension zero over $k$} if all its finitely generated representations have finite length.

As is typical when studying functor categories, we assume throughout that $\mathcal{D}$ is small.
\subsection{Results}
Given objects $x, y \in \mathcal{D}$, let $S(x,y)$ be the set of self-maps of $x$ that factor through $y$.  For each $d \in \mathcal{D}$ and $s \in S(x,y)$, define a square 0-1-matrix $M_s$ whose rows and columns are indexed by $\Hom_{\mathcal{D}}(d, x)$.  Put a $1$ in position $(f,g)$ whenever $s \circ f = g$.  In other words, the entries of $M_s$ record the commutativity of the following diagram:
$$
\xymatrix{
& d \ar[ld]_f \ar[rd]^g & \\
x \ar[r] \ar@/_1.4pc/[rr]_s & y\ar[r] & x.
}
$$
\begin{defn} \label{defn:preorder}
Write $x \leq_d y$ if the identity matrix is in the $k$-span of the matrices $M_s$.
\end{defn}
\noindent
In Lemma \ref{lem:preorder}, we will see that $\leq_d$ is reflexive and transitive, and so defines a preorder.  An equivalent definition of $\leq_d$ better-suited to proofs is available in Proposition \ref{prop:alt}.

\begin{defn} \label{defn:hommod}
A \textbf{homological modulus} for a category $\mathcal{D}$ over a ring $k$ is a function
$$
\mu : \Ob(\mathcal{D}) \longrightarrow \{ \mbox{finite subsets of } \Ob(\mathcal{D}) \}
$$
such that for every $d, x \in \mathcal{D}$, there exists $y \in \mu(d)$ so that $x \leq_d y$.
\end{defn}

\begin{thm} \label{thm:detection}
A category $\mathcal{D}$ is dimension zero over a commutative ring $k$ if and only if $k$ is Artinian, each hom-set $\Hom_{\mathcal{D}}(x, y)$ is finite, and $\mathcal{D}$ admits a homological modulus over $k$.
\end{thm}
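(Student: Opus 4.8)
The plan is to prove the two directions separately, with the bulk of the work in showing that the three conditions (Artinian $k$, finite hom-sets, homological modulus) force every finitely generated representation to have finite length.

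First I would dispose of the necessity direction. If $\mathcal{D}$ is dimension zero, then Artinianness of $k$ follows by restricting attention to the trivial category with one object and one morphism (so that representations are just $k$-modules, and finite length of finitely generated modules is exactly the Artinian condition). Finiteness of each $\Hom_{\mathcal{D}}(x,y)$ should follow by examining the representable functor $k[\Hom_{\mathcal{D}}(x,-)]$, which is finitely generated (by the identity at $x$); if some hom-set into $y$ were infinite, the value at $y$ would be an infinite-rank free module, which cannot have finite length over an Artinian ring. The existence of a homological modulus is the subtle part of necessity: I expect one takes $\mu(d)$ to be a finite set of objects $y$ witnessing that the descending chain of subrepresentations of the representable at $d$ generated ``below degree $y$'' must stabilize --- concretely, using the definition of $\leq_d$ via Proposition \ref{prop:alt}, one argues that without such a finite witnessing set one could build an infinite strictly descending (or ascending) chain inside a finitely generated representation.

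For the sufficiency direction --- the main content --- I would set up an induction or a filtration argument on a finitely generated representation $V$ generated in degrees $d_1, \dots, d_l$. The key technical device should be the preorder $\leq_d$ and the modulus $\mu$: given a generator in degree $d$, the condition ``$x \leq_d y$ for some $y \in \mu(d)$'' means that every hom-set $\Hom_{\mathcal{D}}(d,x)$ is, up to the $k$-span relations encoded by the matrices $M_s$, controlled by the finitely many objects in $\mu(d)$. I would use this to show that the ``orbit'' of a generator --- the subrepresentation it generates --- is, after passing to the relevant quotients, built from finitely many pieces each of which is a finitely generated module over an Artinian ring, hence of finite length. More precisely, I expect the argument runs: by finiteness of hom-sets, $V(x)$ is a finitely generated $k$-module for every $x$; the modulus lets one bound, uniformly in $x$, the length of $V(x)$ in terms of the lengths of $V(y)$ for $y$ ranging over the finite set $\bigcup_i \mu(d_i)$ (together with its iterates) --- the matrices $M_s$ being identities in the relevant span is exactly what lets one ``transport'' composition-series data from degree $y$ back to degree $x$. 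Once $\sup_x \operatorname{length}_k V(x)$ is finite, both chain conditions on subrepresentations follow, since a subrepresentation is determined by its (uniformly bounded) objectwise values and these live in Noetherian/Artinian $k$-modules.

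The main obstacle I anticipate is making precise the passage from ``the identity is in the $k$-span of the $M_s$'' to an actual finite bound on $\operatorname{length}_k V(x)$: one must check that the linear-algebraic relation among the $M_s$, which a priori only records combinatorial coincidences $s \circ f = g$ of morphisms, genuinely implies that $V(x)$ is spanned (as a $k$-module) by the images of finitely many maps $V(y) \to V(x)$ with $y \in \mu(d)$ --- and moreover that this spanning is ``efficient'' enough to bound length, not merely generation. I expect this is where Proposition \ref{prop:alt}'s reformulation of $\leq_d$ is essential, and where one must be careful that the argument is genuinely two-sided (controlling both ascending and descending chains), since finite generation alone over an Artinian ring does not give finite length at the level of the representation category without such a uniform bound. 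A secondary point requiring care is well-definedness of the induction: the objects in $\mu(d)$ need not be ``smaller'' in any obvious sense, so one cannot naively induct on degree; instead one should induct on the bound itself or argue by a minimal-counterexample/Noetherian-induction style argument using that $\leq_d$ is a genuine preorder (Lemma \ref{lem:preorder}).
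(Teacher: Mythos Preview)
Your sufficiency argument contains a genuine gap. A uniform bound on $\sup_x \operatorname{length}_k V(x)$ does \emph{not} by itself imply that $V$ has finite length as a representation: with $\mathcal{D} = (\mathbb{N},\leq)$ and $V(n)=k$ for all $n$, each value has length $1$ yet $V$ admits the infinite descending chain of subrepresentations $W_m$ supported on $\{n : n \geq m\}$. What the paper extracts from the homological modulus (via Theorem~\ref{thm:main} and Corollary~\ref{cor:Noetherian}) is the stronger statement that every \emph{subrepresentation} of $V$ is itself generated in the finite set $\mu(\{d_1,\ldots,d_l\})$. A chain of subrepresentations is therefore detected by a chain inside the single finite-length $k$-module $\bigoplus_{m\in\mu(\{d_i\})} V(m)$. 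The point is that the identity $B(z)\circ A(z)=1$ furnished by the matrices $M_s$ is built out of maps $V(p)$ and hence restricts to any $W \subseteq V$; you need this naturality, not just a bound on objectwise lengths.

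Your necessity argument is also not viable as written. You cannot ``restrict to the trivial category'' to deduce $k$ is Artinian, since $\mathcal{D}$ need not contain an object with trivial endomorphism monoid; the paper instead shows each monoid algebra $k\End_{\mathcal{D}}(d)$ is Artinian (by left Kan extending its regular representation to a finitely generated $\mathcal{D}$-representation) and then invokes Zel'manov's theorem to conclude that $k$ is Artinian and $\End_{\mathcal{D}}(d)$ finite. More seriously, your sketch for producing the homological modulus (``build an infinite chain from its failure'') does not match the actual mechanism. The paper's construction is structural, not a chain argument: one shows the category of finitely generated projectives is Krull-Schmidt, obtains a projective cover for each simple, and defines $\mu(d)$ by choosing, for each subset $Q$ of $\{$covers of composition factors of $P^d\}$ that arises as a set of summands of some $P^y$, a witnessing object $y_Q$. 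The verification that $x \leq_d y_Q$ then comes from decomposing $P^x = E \oplus Z$ where $Z$ collects the summands admitting no nonzero map to $P^d$ and $E$ embeds as a summand of a power of $P^{y_Q}$; the idempotent $\pi_E$ yields $1 = \omega + (1-\omega) \in (x,y_Q,x) + \rann{d}{(x,x)}$ as required by Proposition~\ref{prop:alt}.
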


\begin{ex}[An infinite category of dimension zero] \label{ex:dkc}
Define the \textbf{simplex category} $\Delta$ with objects $\{1, \ldots, n\}$ for $n \in \mathbb{N}$, $n \geq 1$, and weakly monotone functions between them; representations of $\Delta$ are usually called  \textbf{cosimplicial $k$-modules}.  Under the Dold-Kan Correspondence (see \cite{doldclassical}, or \cite{weibel} for a modern account), finitely generated cosimplicial $k$-modules correspond to bounded cochain complexes of finitely generated $k$-modules supported in nonnegative degrees.  It follows that $\Delta$ is dimension zero over any Artinian commutative ring.  In keeping with Theorem \ref{thm:detection}, we produce an explicit homological modulus for $\Delta$ over $\mathbb{Z}$---and hence over any ring---in Example \ref{ex:delta}.
\end{ex}

\subsection{Bounded models}
Motivated by Example \ref{ex:dkc}, introduce the $\mathbb{Z}$-linear (which is to say, $\mop{Mod}_{\mathbb{Z}}$-enriched) category $\Ch$ 
$$
0 \overset{\partial_0}{\longrightarrow} 1 \overset{\partial_1}{\longrightarrow} 2 \overset{\partial_2}{\longrightarrow} 3 \overset{\partial_3}{\longrightarrow} \cdots
$$
freely generated by the morphisms $\partial_i$ subject to the relations $\partial_{i+1} \circ \partial_{i} = 0$.  By construction, a $\mathbb{Z}$-linear functor $\Ch \longrightarrow \RMod$ is a cochain complex of $k$-modules.  Finitely generated representations of $\Ch$ are ``bounded'' in the sense that they vanish away from a finite set of objects.  
The Dold-Kan correspondence says that finitely generated $\Delta$-representations are equivalent to finitely generated $\mathbb{Z}$-linear $\Ch$-representations, and so the apparently infinite data of a finitely generated $\Delta$-representation compresses to a bounded $\Ch$-representation.

In Example \ref{ex:dkc}, the existence of the category $\Ch$ immediately gives that $\Delta$ is dimension zero over any Artinian ring.  We now argue the other direction, showing that $\mathcal{D}$ is dimension zero over $k$ exactly when there exists some $k$-linear category $\mathcal{C}$ whose finitely generated representations give bounded versions of finitely generated representations of $\mathcal{D}$.  In what follows, $\catname{Rep}_k(\mathcal{D})$ stands for the category of functors $\mathcal{D} \longrightarrow \RMod$, and $\catname{Rep}_k(\mathcal{C})$ stands for the $k$-linear functors $\mathcal{C} \longrightarrow \RMod$.

\begin{defn}
A \textbf{bounded model} for $\catname{Rep}_k(\mathcal{D})$ is an equivalence of categories 
$$
\catname{Rep}_k(\mathcal{D}) \overset{\sim}{\longleftrightarrow} \catname{Rep}_{k}(\mathcal{C})
$$
for some $k$-linear category $\mathcal{C}$ with the property that every finitely generated $\mathcal{C}$-representation $V$ has $Vc = 0$ for all but finitely many objects $c \in \mathcal{C}$.
\end{defn}

\begin{thm} \label{thm:Cauchy}
A category $\mathcal{D}$ is dimension zero over $k$ if and only if $k$ is Artinian and $\catname{Rep}_k(\mathcal{D})$ has a bounded model $\catname{Rep}_k(\mathcal{D}) \overset{\sim}{\longleftrightarrow} \catname{Rep}_{k}(\mathcal{C})$ where each $\Hom_{\mathcal{C}}(p, p')$ is a finitely generated $k$-module.  
\end{thm}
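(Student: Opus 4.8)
The plan is to deduce Theorem \ref{thm:Cauchy} from Theorem \ref{thm:detection} by exhibiting, in both directions, the passage between a homological modulus and a bounded model. The forward direction is the substantive one. Assuming $\mathcal{D}$ is dimension zero, Theorem \ref{thm:detection} gives that $k$ is Artinian, all hom-sets are finite, and there is a homological modulus $\mu$. I would build $\mathcal{C}$ as a $k$-linear category whose objects are (isomorphism classes of) the indecomposable projective representations $P_d = k\Hom_{\mathcal{D}}(d,-)$ that arise, together with enough summands to make the category "Cauchy complete" (closed under retracts and finite direct sums) — or, more cheaply, take $\mathcal{C}$ to be the full subcategory of $\catname{Rep}_k(\mathcal{D})$ on a set of finitely generated projective generators, with $\Hom_{\mathcal{C}}(P,P') = \Hom_{\catname{Rep}_k(\mathcal{D})}(P,P')$, which by Yoneda is a subquotient of a finite sum of the finite sets $k\Hom_{\mathcal{D}}(d,d')$, hence a finitely generated $k$-module. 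Then the classical tilting/Morita-type equivalence $\catname{Rep}_k(\mathcal{D}) \simeq \catname{Rep}_k(\mathcal{C})$ holds provided the chosen projectives generate and are "small" in the appropriate sense; this is where dimension zero is used, since finite length of finitely generated representations guarantees that every finitely generated representation is a quotient of a finite sum of the $P_d$ and that the requisite projective resolutions can be chosen finitely generated at each stage.

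**The key point** for boundedness is to use $\mu$ to pick the object set of $\mathcal{C}$: one shows that $x \leq_d y$ forces, for a finitely generated $V$, a relation expressing $V$-data at $x$ in terms of $V$-data at objects in $\mu(d)$, so that after replacing $\mathcal{D}$ by the Morita-equivalent $\mathcal{C}$ built from the projectives $P_y$ for $y$ ranging over the finite sets $\mu(d)$ (as $d$ ranges over generators), a finitely generated $\mathcal{C}$-representation is supported on finitely many objects. Concretely, if $V$ is finitely generated in degrees $d_1,\dots,d_l$, then $V$ "lives on" the finite set $\bigcup_i \mu(d_i)$ in the sense that its restriction there determines it; translating this across the equivalence says $Vc = 0$ for $c$ outside a finite set. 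The relation $x \le_d y$ — the identity matrix lying in the $k$-span of the $M_s$ — is exactly what produces the idempotent/splitting that realizes $P_x$ as a summand of a finite sum of copies of $P_y$, which is the mechanism making $\mathcal{C}$'s indecomposables form a finite-support generating set relative to the generators of $V$.

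**For the converse**, suppose $k$ is Artinian and such a bounded model $\catname{Rep}_k(\mathcal{D}) \simeq \catname{Rep}_k(\mathcal{C})$ exists with each $\Hom_{\mathcal{C}}(p,p')$ a finitely generated $k$-module. A finitely generated $\mathcal{C}$-representation $V$ is supported on a finite object set $F \subseteq \Ob(\mathcal{C})$ and each $Vp$ is a finitely generated $k$-module, hence (as $k$ is Artinian) of finite length; since $V$ is determined by the finitely many finite-length $k$-modules $\{Vp\}_{p \in F}$ together with the action of the finitely generated hom-modules, $V$ itself has finite length as a $\mathcal{C}$-representation — any chain of subrepresentations restricts to a chain in the finite-length module $\bigoplus_{p\in F} Vp$, so stabilizes. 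Transporting along the equivalence, every finitely generated $\mathcal{D}$-representation has finite length, i.e. $\mathcal{D}$ is dimension zero. (One must check that the equivalence matches up "finitely generated" on both sides; this follows because equivalences preserve epimorphisms, finite colimits, and the projective generators, so a representation is finitely generated iff it is a quotient of a finite sum of representable/projective objects, a property visible on either side.)

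**The main obstacle** I anticipate is the forward direction's construction of $\mathcal{C}$ together with the proof that the resulting functor category equivalence is genuine: one needs the chosen finitely generated projectives to be compact and to generate $\catname{Rep}_k(\mathcal{D})$, and one needs $\mathcal{C}$'s hom-modules to come out finitely generated, both of which rely delicately on the finite-length hypothesis and on finiteness of the $\mathcal{D}$-hom-sets. Packaging $\mu$ so that it simultaneously controls the object set of $\mathcal{C}$ and certifies finite support is the crux; I would isolate this as a lemma stating that for finitely generated $V$ and any generating object $d$, the structure maps out of $Vd$ factor (after applying a suitable element of the $k$-span of the $M_s$) through $\bigoplus_{y \in \mu(d)} Vy$, and then feed this into the standard reconstruction of $\catname{Rep}_k(\mathcal{D})$ from a Cauchy-complete category of projectives.
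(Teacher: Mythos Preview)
Your overall architecture matches the paper's: deduce Theorem~\ref{thm:Cauchy} from Theorem~\ref{thm:detection}, take $\mathcal{C}$ to be (the opposite of) the full subcategory of $\Rep$ on indecomposable projectives, and check hom-finiteness via Yoneda and the finiteness of $\Hom_{\mathcal{D}}(d,d')$. Your treatment of the converse is, if anything, more careful than the paper's one-line ``certainly.''

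The gap is in your boundedness argument for the forward direction. Two specific problems:

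\textbf{First}, the claim that $x \leq_d y$ ``produces the idempotent/splitting that realizes $P_x$ as a summand of a finite sum of copies of $P_y$'' is not correct. The relation $x \leq_d y$ only furnishes $\omega \in (x,y,x)$ with $1-\omega \in \rann{d}{(x,x)}$; it does not give an idempotent, and $P^x$ need not be a summand of $(P^y)^{\oplus n}$ in general. (Observation~\ref{obs:retract} shows the retract conclusion requires $d=x$.) In the paper the logic runs the other way: the Krull--Schmidt decomposition $P^x = E \oplus Z$ is established first, and then the projection $\pi_E$ is used to manufacture $\omega$ and hence $x \leq_d y$.

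\textbf{Second}, your passage from ``$V$ is determined by its restriction to the finite set $\bigcup_i \mu(d_i) \subseteq \Ob(\mathcal{D})$'' to ``the corresponding $\mathcal{C}$-representation vanishes outside a finite set of objects of $\mathcal{C}$'' is not a translation across the equivalence; these are finiteness statements about different categories, and the first does not formally imply the second. What you must actually show is that the \emph{representable} $\mathcal{C}$-functors $\Hom_{\mathcal{C}}(p,-)$ are bounded, i.e.\ that each indecomposable projective $P$ receives nonzero maps from only finitely many indecomposable projectives $P'$. The paper does this with a one-line composition-factor argument: any nonzero $P' \to P$ has image admitting the unique simple quotient of $P'$ as a composition factor, and $P$ (being finite length) has only finitely many composition factors. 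This replaces your attempted use of $\mu$ entirely and is the missing idea.
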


It follows that if $\mathcal{D}$ is dimension zero over $k$, any finitely generated $\mathcal{D}$-representation can be described completely by giving the non-zero maps of its corresponding $\mathcal{C}$-representation.

\begin{rem} \label{rem:ordinary}
Concretely, we may take $\mathcal{C}$ to be the opposite of the full subcategory of $\catname{Rep}_k(\mathcal{D})$ spanned by one representative of each isomorphism class of indecomposable projective.  By analogy with the case of a finite dimensional algebra, this category may be termed the ``ordinary quiver'' of $\mathcal{D}$.
\end{rem}

It is worth mentioning, however, that finding the indecomposable projectives of a category (or even a monoid) is computationally expensive.  It is often much easier to find a homological modulus than a bounded model.  In the next section, we show that some of the benefits of a bounded model are already conferred by a homological modulus.

\subsection{Consequences of a homological modulus}
In this section, we assume that $\mathcal{D}$ admits a homological modulus $\mu$ over $k$, and write $\mu( \{ d_i \}_{i \in I}) = \cup_{i \in I} \mu(d_i)$ for any (possibly infinite) indexing set $I$.  We do not assume that the hom-sets $\Hom_{\mathcal{D}}(x, y)$ are finite or that $k$ is Artinian.  We also drop the assumption that each $\mu(d)$ be finite.
\noindent

Recall that the \textbf{full subcategory} spanned by a collection of objects is the largest subcategory containing only those objects.
\begin{thm} \label{thm:determined}
Representations generated in degrees $\{ d_i \}_{i \in I}$ are determined up to isomorphism by their restrictions to the full subcategory spanned by the objects $\mu( \{d_i\}_{i \in I})$.
\end{thm}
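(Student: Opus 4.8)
The plan is to reconstruct a representation $V$ generated in degrees $\{d_i\}_{i\in I}$ from its restriction to the full subcategory $\mathcal{E}$ spanned by $\mu(\{d_i\}_{i\in I})$, using the preorder relation $\leq_d$ to transport values of $V$ on objects outside $\mathcal{E}$ back onto objects inside it. The key observation is the meaning of $x \leq_d y$ via the matrices $M_s$: if the identity matrix on $\Hom_{\mathcal{D}}(d,x)$ lies in the $k$-span $\sum_s c_s M_s$, then for any $f \in \Hom_{\mathcal{D}}(d,x)$ one has $f = \sum_{s} c_s \sum_{g : s\circ f = g} g$, i.e.\ each morphism out of $d$ landing in $x$ is a $k$-linear combination of morphisms out of $d$ that factor through $y$. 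Consequently, for any representation $V$ and any $v \in V(d)$, the element $V(f)(v) \in V(x)$ is determined as a $k$-combination of images $V(h)(V(e)(v))$ where $e : d \to y$ and $h : y \to x$. So $V(x)$, \emph{restricted to the submodule generated from degree $d$}, is controlled by $V(y)$.

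First I would make precise the following: let $W \subseteq V$ be the subrepresentation generated by the chosen generators in degrees $\{d_i\}$; by the hypothesis that $V$ is generated in those degrees, $W = V$, so every element of every $V(x)$ is a $k$-linear combination of elements $V(f)(v)$ with $v$ a generator in some $V(d_i)$ and $f : d_i \to x$. Next, for each pair $(d_i, x)$, the homological modulus supplies some $y \in \mu(d_i)$ with $x \leq_{d_i} y$; expanding the identity matrix in the $M_s$ as above rewrites each such $V(f)(v)$ as a $k$-combination of elements in the image of $V(\mathcal{E}\text{-morphisms})$ applied to elements of $V(y) \subseteq V|_{\mathcal{E}}$. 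This shows the natural map from "the representation freely built from $V|_{\mathcal{E}}$" — more precisely, the left Kan extension $\Lan_{\iota} (V|_{\mathcal{E}})$ along the inclusion $\iota : \mathcal{E} \hookrightarrow \mathcal{D}$, composed with the counit to $V$ — is objectwise surjective.

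For the reconstruction up to isomorphism, I would argue that two representations $V, V'$ generated in degrees $\{d_i\}$ with an isomorphism $\phi : V|_{\mathcal{E}} \xrightarrow{\sim} V'|_{\mathcal{E}}$ admit a unique extension to an isomorphism $V \xrightarrow{\sim} V'$. Surjectivity of the extension in each degree follows from the previous paragraph applied to $V'$; injectivity follows symmetrically by running the same rewriting for $V$, since any element of $V(x)$ is expressed through $V$ of $\mathcal{E}$-morphisms applied to values in $\mathcal{E}$-degrees where $\phi$ is already an isomorphism, so a kernel element must vanish. Well-definedness of the extension — that the assignment $V(f)(v) \mapsto V'(f)(\phi v)$ on generators does not depend on the chosen expression — is the point requiring care: one checks that any two $k$-linear relations among the $V(f)(v)$ that hold in $V(x)$ are forced, after restricting along the $\leq_{d_i}$-rewriting, to be relations already visible in $V|_{\mathcal{E}}$, hence respected by $\phi$.

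The main obstacle I anticipate is precisely this well-definedness/coherence step: the relation $x \leq_{d_i} y$ only controls morphisms out of a \emph{single} generating object $d_i$ at a time, and a general element of $V(x)$ mixes contributions from several generators, along several different morphisms, possibly routed through several different choices of $y \in \mu(d_i)$. Organizing these simultaneously — ideally by phrasing the whole argument as: the unit $V|_{\mathcal{E}} \to (\Lan_\iota V|_{\mathcal{E}})|_{\mathcal{E}}$ and counit $\Lan_\iota (V|_{\mathcal{E}}) \to V$ interact so that $V$ is a quotient of $\Lan_\iota(V|_{\mathcal{E}})$ by a subrepresentation that is itself detected on $\mathcal{E}$ — is where the real work lies, and where Proposition \ref{prop:alt}'s reformulation of $\leq_d$ is likely to be the clean tool. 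Everything else (surjectivity, the chain of $k$-linear substitutions, functoriality in $\mathcal{D}$) is bookkeeping once the rewriting lemma is stated correctly.
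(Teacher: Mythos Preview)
Your rewriting observation is exactly right, and surjectivity of the proposed extension follows from it just as you say. The obstacle you flag---well-definedness when contributions from several $d_i$ mix in a single $V(x)$---is genuine, and your sketch does not close it. The Kan-extension phrasing does not help: the kernel of the counit $\Lan_\iota(V|_{\mathcal E}) \to V$ vanishes on $\mathcal E$ (since $\mathcal E$ is full), so ``detected on $\mathcal E$'' would mean the kernel is zero, i.e.\ the counit is an isomorphism---but that is the whole statement, and you have given no argument for injectivity. Nor does your relation-pushing work: rewriting $\sum c_j V(f_j)(v_j)=0$ via $\leq_{d_i}$ moves the \emph{sources} of the summands into $\mathcal E$ but leaves the relation sitting in $V(x)$ with $x\notin\mathcal E$, so it is not visibly a relation in $V|_{\mathcal E}$. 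Finally, you cannot appeal to Corollary~\ref{cor:Noetherian} here, since in the paper's logic that corollary is a consequence of the present theorem.

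The paper bypasses this entirely with an explicit reconstruction. It first refines $\mu$ to an \emph{effective context} (Proposition~\ref{prop:refinement}), which records each witness $x\leq_{d_i} y$ as a finite list of pairs $(\alpha_r,\beta_r)$ factoring through $m_r\in\mu(d_i)$. From this, Construction~\ref{cons:key} assembles, for every $z$, block matrices $A(z),B(z)$ with $B(z)A(z)=1_{V(z)}$, whose entries are $k$-combinations of maps $V(g)$ for $g$ entirely inside $\mathcal E$. The multi-generator coherence you worry about is handled by an explicit induction on the number of generating degrees: the $j$th block of $B(z)$ is defined by subtracting off what the first $j-1$ generators already account for. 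This yields a representation $Y_V$ built solely from $V|_{\mathcal E}$ with $V\simeq Y_V$ (Theorem~\ref{thm:main}); Theorem~\ref{thm:determined} then follows, with Zorn's lemma for infinite $I$. If you want your soft approach to go through, this inductive correction is the idea you are missing.
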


\begin{cor} \label{cor:Noetherian}
If $V$ is generated in degrees $\{ d_i \}_{i \in I}$, then any subrepresentation of $V$ is generated in degrees $\mu( \{ d_i \}_{i \in I} )$. 
\end{cor}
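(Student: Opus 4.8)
The plan is to derive Corollary~\ref{cor:Noetherian} directly from Theorem~\ref{thm:determined}. Fix a representation $V$ generated in degrees $\{d_i\}_{i\in I}$ and a subrepresentation $W\subseteq V$. Write $E = \mu(\{d_i\}_{i\in I})$, let $\mathcal{E}\subseteq\mathcal{D}$ be the full subcategory spanned by the objects of $E$ with inclusion $\iota\colon\mathcal{E}\hookrightarrow\mathcal{D}$, and let $W_0\subseteq W$ be the subrepresentation generated in degrees $E$: the smallest subrepresentation of $W$ (equivalently, of $V$) whose value at $e$ is all of $W(e)$ for each $e\in E$. By construction $W_0(e) = W(e)$ for every $e\in E$, so it suffices to prove $W_0 = W$.

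First I would record the elementary fact that a quotient of a representation generated in degrees $\{d_i\}_{i\in I}$ is again generated in degrees $\{d_i\}_{i\in I}$ (given a candidate subrepresentation of the quotient, pull it back objectwise through the quotient map, observe the preimage is a subrepresentation agreeing with the whole at each $d_i$, and apply the hypothesis). In particular both $V/W_0$ and $V/W$ are generated in degrees $\{d_i\}_{i\in I}$. The inclusion $W_0\hookrightarrow W$ induces a surjection $q\colon V/W_0 \twoheadrightarrow V/W$ compatible with the canonical projections out of $V$, and since $W_0$ and $W$ agree on every object of $\mathcal{E}$, the restriction $\Res_\iota q$ is an isomorphism. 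The key step is to upgrade this to the assertion that $q$ itself is an isomorphism. For this I would invoke Theorem~\ref{thm:determined} in the effective form supplied by its proof: for any representation $U$ generated in degrees $\{d_i\}_{i\in I}$, the counit $\Lan_\iota\Res_\iota U \to U$ is an isomorphism. Applying the functor $\Lan_\iota$ to the isomorphism $\Res_\iota q$ and invoking naturality of the counit identifies $q$ with an isomorphism $\Lan_\iota\Res_\iota(V/W_0)\xrightarrow{\sim}\Lan_\iota\Res_\iota(V/W)$, so $q$ is an isomorphism. Finally, because $q$ intertwines the projections $V\to V/W_0$ and $V\to V/W$, these two projections have the same kernel, i.e.\ $W_0 = W$, which is the claim.

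I expect the only genuine obstacle to be the point flagged in the key step: the verbatim statement of Theorem~\ref{thm:determined}---that two representations generated in degrees $\{d_i\}_{i\in I}$ with isomorphic restrictions to $\mathcal{E}$ are themselves isomorphic---is too weak to run this argument, since without a finiteness hypothesis a representation may be isomorphic to a proper quotient of itself, so from $V/W_0\cong V/W$ alone one cannot conclude $W_0 = W$. What is needed is the \emph{natural} reconstruction statement, that $V$ is canonically recovered from $\Res_\iota V$, equivalently that the counit is an isomorphism; this is exactly what the proof of Theorem~\ref{thm:determined} establishes. Granting that, the remainder is a routine diagram chase using only that $\Res_\iota$ is exact and that $\Lan_\iota\Res_\iota$ is naturally augmented to the identity.
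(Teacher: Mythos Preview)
Your argument is sound and takes a genuinely different route from the paper's. The paper works directly with the subrepresentation $W$ rather than passing to quotients: the proof of Theorem~\ref{thm:main} builds, for each $z$, block matrices $A_V(z),B_V(z)$ whose entries are $k$-linear combinations of structure maps $V(p)$ with $p$ an arrow of the full subcategory on $\mu(\{d_i\})$, and shows $B_V(z)\circ A_V(z)=1_{Vz}$. Replacing each $V(p)$ by $W(p)$ gives matrices $A'(z),B'(z)$ that intertwine with the inclusion $i\colon W\hookrightarrow V$, so $i\circ B'(z)\circ A'(z)=B_V(z)\circ A_V(z)\circ i=i$; cancelling the monomorphism $i$ yields $B'(z)\circ A'(z)=1_{Wz}$, whence $Wz=\im B'(z)$ is spanned by images of the $W(m)$ for $m\in\mu(\{d_i\})$. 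Your approach and the paper's are really exploiting the same naturality of the $A,B$ construction---the paper along the mono $W\hookrightarrow V$, you in effect along the epi $V/W_0\twoheadrightarrow V/W$---so the difference is one of packaging; yours is more conceptual, the paper's more hands-on and free of Kan extensions. One caution on your write-up: the paper's proof of Theorem~\ref{thm:determined} is a one-line appeal to Zorn plus Theorem~\ref{thm:main} and does not literally establish that the counit $\Lan_\iota\Res_\iota U\to U$ is an isomorphism; moreover $\Lan_\iota\Res_\iota U$ is generated in degrees $\mu(\{d_i\})$ rather than $\{d_i\}$, so Theorem~\ref{thm:main} does not apply to it directly. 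What you actually need---and what the naturality of $A_{(-)},B_{(-)}$ together with $B\circ A=1$ immediately gives---is the weaker statement that a map between representations generated in degrees $\{d_i\}$ whose restriction to $\mathcal{E}$ is an isomorphism is itself an isomorphism. Cite that instead of the counit claim and your key step goes through cleanly.
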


\noindent
In the next result, suppose $V \in \catname{Rep}_k(\mathcal{D})$ is generated in degrees $\{d_i\}_{i \in I}$, and write $\mathcal{E}_q$ for the full subcategory spanned by the collection of objects 
$$
\bigcup_{p = 0}^{q+1} \mu^p(\{d_i\}_{i \in I}),
$$
where $\mu^{p}$ denotes the $p$-fold self-composition of the homological modulus.
\begin{cor} \label{cor:exts}
For any representation $W \in \catname{Rep}_k(\mathcal{D})$, we have 
$$
\Ext^{q}_{\mathcal{D}}(V, W) \simeq \Ext^{q}_{\mathcal{E}_q}(V', W'),
$$
where $V'$ and $W'$ are the restrictions to $\mathcal{E}_q$.  If $k$ is Noetherian, $V$ is finitely generated, $W'$ takes values in finitely generated $k$-modules, and the hom-sets $\Hom_{\mathcal{E}_q}(x, y)$ are finite, then $\Ext^{q}_{\mathcal{D}}(V, W)$ is finitely generated as a $k$-module. 
\end{cor}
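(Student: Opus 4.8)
The plan is to reduce the $\Ext$ computation over $\mathcal{D}$ to one over the much smaller category $\mathcal{E}_q$ by restricting a carefully chosen projective resolution. First I would build a projective resolution $P_\bullet \to V$ in $\catname{Rep}_k(\mathcal{D})$ in which $P_p$ is generated in degrees $\mu^p(\{d_i\}_{i\in I})$ (with $\mu^0 = \mathrm{id}$). Start from the surjection $P_0 \twoheadrightarrow V$ out of a direct sum of representable projectives $k[\Hom_{\mathcal{D}}(d_i,-)]$, one summand per element of a $k$-generating set of each $V(d_i)$; since $P_0$ is generated in degrees $\{d_i\}$, Corollary \ref{cor:Noetherian} shows $\ker(P_0\to V)$ is generated in degrees $\mu(\{d_i\})$, and we may cover it by a sum of representables $k[\Hom_{\mathcal{D}}(e,-)]$ with $e\in\mu(\{d_i\})$ to obtain $P_1$. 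Iterating gives $P_p$ generated in degrees $\mu^p(\{d_i\})$ for all $p$, with each $P_p$ an explicit direct sum of representables at objects of $\mu^p(\{d_i\})$.

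Next I would restrict along the full inclusion $\iota\colon \mathcal{E}_q \hookrightarrow \mathcal{D}$. Restriction is exact (kernels and images are formed objectwise), so $\iota^*P_\bullet \to V'$ remains exact; and since $\mathcal{E}_q$ is full, $\iota^*\bigl(k[\Hom_{\mathcal{D}}(e,-)]\bigr) = k[\Hom_{\mathcal{E}_q}(e,-)]$ is a representable projective of $\catname{Rep}_k(\mathcal{E}_q)$ whenever $e\in\mathcal{E}_q$. Because $\mu^p(\{d_i\})\subseteq\mathcal{E}_q$ for $p\leq q+1$, the restrictions $\iota^*P_0,\dots,\iota^*P_{q+1}$ are projective, so $\iota^*P_{q+1}\to\cdots\to\iota^*P_0\to V'\to 0$ is a partial projective resolution of $V'$ just long enough to compute $\Ext^q_{\mathcal{E}_q}(V',W')$ — this is precisely why the definition of $\mathcal{E}_q$ runs up to $\mu^{q+1}$. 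Finally, the Yoneda isomorphism $\Hom_{\mathcal{D}}(k[\Hom_{\mathcal{D}}(e,-)],W)\cong W(e)$ and its $\mathcal{E}_q$-analogue $\Hom_{\mathcal{E}_q}(k[\Hom_{\mathcal{E}_q}(e,-)],W')\cong W'(e)=W(e)$ for $e\in\mathcal{E}_q$ identify the cochain complexes $\Hom_{\mathcal{D}}(P_\bullet,W)$ and $\Hom_{\mathcal{E}_q}(\iota^*P_\bullet,W')$ in cohomological degrees $0$ through $q+1$, compatibly with the differentials (both are induced by the maps $P_{p+1}\to P_p$). Passing to $H^q$ yields $\Ext^{q}_{\mathcal{D}}(V,W)\simeq\Ext^{q}_{\mathcal{E}_q}(V',W')$.

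For the finite-generation claim I would observe that when $I$ is finite each $\mu^p(\{d_i\})$ is a finite union of finite sets, so $\mathcal{E}_q$ has finitely many objects and, by hypothesis, finite hom-sets. Then the category algebra $k[\mathcal{E}_q]:=\bigoplus_{x,y}k[\Hom_{\mathcal{E}_q}(x,y)]$ is unital and module-finite over $k$, hence Noetherian, and $\catname{Rep}_k(\mathcal{E}_q)$ is equivalent — preserving $\Ext$ and representable projectives — to $k[\mathcal{E}_q]$-modules. Under this equivalence $V'$ is finitely generated (it is generated over $\mathcal{E}_q$ in the finitely many degrees $\{d_i\}$, since $V$ is so over $\mathcal{D}$ and $\mathcal{E}_q$ is full, with each $V(d_i)$ finitely generated over $k$), and $W'$ becomes $\bigoplus_{x\in\Ob\mathcal{E}_q}W(x)$, a finitely generated $k$-module. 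Resolving $V'$ by finite free $k[\mathcal{E}_q]$-modules and applying $\Hom_{k[\mathcal{E}_q]}(-,W')$ produces a complex of finitely generated $k$-modules, so its cohomology $\Ext^{q}_{\mathcal{E}_q}(V',W')\simeq\Ext^{q}_{\mathcal{D}}(V,W)$ is finitely generated over the Noetherian ring $k$.

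The step I expect to be the main obstacle is the bookkeeping on generation degrees: verifying that the iterated syzygies are really generated in degrees $\mu^p(\{d_i\})$ — this is exactly where Corollary \ref{cor:Noetherian}, and hence the homological modulus, is used — and that truncating at stage $q+1$ loses nothing when computing $\Ext^q$. A subtler point is that Corollary \ref{cor:Noetherian} concerns $\mathcal{D}$, not $\mathcal{E}_q$ (which need not admit a homological modulus at all), so the finite-generation half must be argued by transporting to $k[\mathcal{E}_q]$ and invoking ordinary Noetherian ring theory rather than by attempting to run the syzygy argument inside $\mathcal{E}_q$. One should also check the degenerate case $q=0$ and confirm that the Yoneda identification is natural enough to respect the differentials of the two Hom-complexes.
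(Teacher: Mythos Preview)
Your argument for the isomorphism $\Ext^{q}_{\mathcal{D}}(V,W)\simeq\Ext^{q}_{\mathcal{E}_q}(V',W')$ is essentially the paper's: build a resolution by representable projectives with generation degrees controlled by Corollary~\ref{cor:Noetherian}, restrict along the full inclusion so that the first $q+2$ terms remain projective, and use Yoneda to identify the two Hom-complexes in the relevant range.

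For the finite-generation claim you take a different, longer route. You pass to the category algebra $k[\mathcal{E}_q]$, build a \emph{new} finite free resolution of $V'$ there, and invoke Noetherianity of $k[\mathcal{E}_q]$. This works, but the paper's argument is more direct and avoids the detour: it simply reuses the resolution already constructed. Under the finiteness hypotheses ($V$ finitely generated, $k$ Noetherian, hom-sets in $\mathcal{E}_q$ finite), the syzygy argument in $\mathcal{D}$ produces each $P_p$ as a \emph{finite} direct sum of representables $P^e$ with $e\in\mu^p(\{d_i\})$; then by Yoneda $\Hom_{\mathcal{D}}(P_p,W)\cong\bigoplus W(e)$ is a finite direct sum of the finitely generated $k$-modules $W'(e)$, so the cochain complex itself already has finitely generated terms. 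Your worry that ``Corollary~\ref{cor:Noetherian} concerns $\mathcal{D}$, not $\mathcal{E}_q$'' is therefore moot: one never needs to run a syzygy argument inside $\mathcal{E}_q$ at all, because the very complex that gave you the $\Ext$-isomorphism already does the job.
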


\subsection{Computation in the presence of a homological modulus; Effective contexts}
We present a refinement of the data of a homological modulus that (1) always exists, (2) is convenient for proofs and computations, and (3) gives the recipe for actually recovering $V$ from its restriction as in Theorem \ref{thm:determined}.
\begin{defn}
An \textbf{effective context} refining a homological modulus $\mu$ for a category $\mathcal{D}$ over a commutative ring $k$ consists of the following data.  For every pair of objects $d, x \in \mathcal{D}$:
\begin{itemize}
\item $\kappa$, a natural number
\item $m = (m_1, m_2, \ldots, m_\kappa)$, a $\kappa$-tuple of objects $m_i \in \mu(d)$
\item $\alpha = (\alpha_1, \alpha_2, \ldots, \alpha_\kappa)$, a $\kappa$-tuple of finitely-supported functions $\alpha_i \colon \Hom_{\mathcal{D}}(x, m_i) \longrightarrow k$
\item $\beta = (\beta_1, \beta_2, \ldots, \beta_\kappa)$, a $\kappa$-tuple of finitely-supported functions $\beta_i \colon \Hom_{\mathcal{D}}(m_i, x) \longrightarrow k$,
\end{itemize}
with the requirement that for every pair of arrows $f, g : d \longrightarrow x$,
\begin{equation} \label{eq:ecc}
\sum_{i=1}^{\kappa} \; \; \sum_{\substack{p \; : \; x \longrightarrow m_i \\ q \; : \; m_i \longrightarrow x \\ q \circ p \circ f = g}}  \alpha_i(p) \beta_i(q) = \left\{
     \begin{array}{lcr}
       1 & : f = g \phantom{.}\\
       0 & : f \neq g.
     \end{array}
   \right.
\end{equation}
\end{defn}
\begin{prop} \label{prop:refinement}
If $\mathcal{D}$ admits a homological modulus $\mu$, then $\mathcal{D}$ admits an effective context refining $\mu$.
\end{prop}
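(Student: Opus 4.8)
The plan is to fix a pair of objects $d, x \in \mathcal{D}$ and construct the required data $(\kappa, m, \alpha, \beta)$ directly, by unwinding the meaning of $x \le_d y$ supplied by the homological modulus. Since \eqref{eq:ecc} is imposed pair-by-pair, it suffices to carry this out for each $(d,x)$ independently.

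First I would invoke the defining property of $\mu$ on the pair $(d,x)$ to obtain an object $y \in \mu(d)$ with $x \le_d y$. By Definition \ref{defn:preorder} this says the identity matrix on $\Hom_{\mathcal D}(d,x)$ lies in the $k$-span of $\{M_s : s \in S(x,y)\}$, so I can fix scalars $c_s \in k$, almost all zero, with $\sum_s c_s M_s = I$; let $S'$ be the finite set of $s$ with $c_s \ne 0$. For each $s \in S'$ I would then choose, once and for all, a factorization $s = q_s \circ p_s$ through $y$, which exists by the very definition of $S(x,y)$.

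Next I would set $\kappa = |S'|$, index $\{1,\dots,\kappa\}$ by $S'$, put $m_s = y$ for every $s$, and define the finitely supported functions $\alpha_s \colon \Hom_{\mathcal D}(x,y)\to k$ and $\beta_s \colon \Hom_{\mathcal D}(y,x)\to k$ to be $c_s$ at $p_s$ (resp.\ $1$ at $q_s$) and $0$ elsewhere — each supported at a single morphism, so the finiteness requirement is automatic whether or not the hom-sets are finite, and $m_s \in \mu(d)$ by construction. Then for arrows $f,g : d \to x$ the product $\alpha_s(p)\beta_s(q)$ is nonzero only at $(p,q)=(p_s,q_s)$, where it equals $c_s$, and that pair contributes to the $(f,g)$-sum exactly when $q_s p_s f = g$, i.e.\ when $M_s[f,g]=1$; hence the double sum in \eqref{eq:ecc} collapses to $\sum_{s\in S'} c_s M_s[f,g] = I[f,g]$, which is the claimed Kronecker delta.

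There is essentially no hard step: the proposition is a repackaging of Definition \ref{defn:preorder} in the language of effective contexts. The two small points I would be careful about are (i) the degenerate case $\Hom_{\mathcal D}(d,x)=\varnothing$, where $S'$ may be empty and \eqref{eq:ecc} is vacuous — one still needs $\mu(d)\ne\varnothing$ to name some $m_1$, which follows by applying the modulus property with $x:=d$ — and (ii) the fact that $(p,q)\mapsto\alpha_i(p)\beta_i(q)$ is rank one, which is precisely why a single index $i$ cannot in general encode more than one self-map $s$, and hence why the definition must allow $\kappa$ to grow with $|S'|$.
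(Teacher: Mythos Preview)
Your proof is correct and essentially identical to the paper's: both pick $y\in\mu(d)$ with $x\le_d y$, write the identity as a finite linear combination $\sum_s c_s M_s$, choose a factorization $s=q_s\circ p_s$ for each contributing $s$, and let $\alpha_s,\beta_s$ be the obvious singly-supported functions. The paper phrases the choice of factorizations as a section of the composition map $\Hom(x,y)\times\Hom(y,x)\to S(x,y)$, but the content is the same; your additional remarks on the degenerate case and on the rank-one nature of $\alpha_i\beta_i$ are correct but not needed for the argument.
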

Let $V$ be a representation generated in degrees $\{ d_1, \ldots, d_l \}$.  We give a formula that computes any induced map $Vf: Vx \longrightarrow Vy$ using only those maps with $x, y \in \mu( \{d_1, \ldots, d_l \})$.  In what follows, use superscripts to distinguish the data of the effective context associated to the various $d_j$ and various $x$.  For example, write $\alpha^{j,x}$ for the $\alpha$ associated to $d_j$ and $x$.

\begin{cons} \label{cons:key}
Let $V$ be a $\mathcal{D}$-representation, and let $f : x \longrightarrow y$ be an arrow of $\mathcal{D}$.  We build a block matrix $X=X_V(f)$ depending on $V$ and $f$, and also a linear map $Y_V(f)$.  Given $i, j \in \{ 1, \ldots, l \}$, define a $\kappa^{i,y} \times \kappa^{j,x}$ block matrix $T^{ij}$ whose $(r, s)$-entry is given by
$$
T^{ij}_{rs} = \sum_{\substack{p \; : \; m^{j,x}_s \longrightarrow x \vspace{2pt} \\ q \;:\; y \longrightarrow m^{i,y}_r}} \beta^{j, x}_s(p) \cdot \alpha^{i, y}_r(q) \cdot V(q \circ f \circ p).
$$
Similarly, for $i, j$ with $1 \leq i < j \leq l$, define a $\kappa^{i,x} \times \kappa^{j,x}$ block matrix $U^{ij}$ whose $(r, s)$-entry is given by
$$
U^{ij}_{rs} = \sum_{\substack{p \; : \; m^{j,x}_s \longrightarrow x \vspace{2pt} \\ q \; : \; x \longrightarrow m^{i,x}_r}} \beta^{j, x}_s(p) \cdot \alpha^{i, x}_r(q) \cdot V(q \circ p).
$$
We now assemble the $l \times l$ block matrix $X$.  The entries in the first column are given by $X_{i1} = T^{i1}$.  Compute every subsequent column from the previous columns using the formula
$$
X_{ij} = T^{ij} - \sum_{n = 1}^{j-1} X_{in} \circ U^{nj}.
$$
If $z \in \mathcal{D}$, factor the linear map $X_V(1_z)$ as the composite of a surjection $B_V(z)$ and an injection $A_V(z)$ so that $X_V(1_z) = A_V(z) \circ B_V(z)$.  Now define the linear map $Y_V(f)$ by the formula
$$
Y_V(f) = B_V(y) \circ X_V(f) \circ A_V(x).
$$
\end{cons}

\begin{thm} \label{thm:main}
Suppose $\mathcal{D}$ is equipped with an effective context refining $\mu$ and that $V$ is a representation of $\mathcal{D}$ generated in degrees $\{d_1, d_2, \ldots, d_l \}$.
\begin{enumerate}
\item The linear maps $Y_V(f)$ built in Construction \ref{cons:key} depend only on the linear maps $V(g)$ where $g$ ranges over the arrows of $\mathcal{D}$ whose source and target both lie in the finite set $\mu(\{d_1, d_2, \ldots, d_l\})$.
\item The assignment $f \mapsto Y_V(f)$ defines a representation of $\mathcal{D}$.
\item There is an isomorphism of representations $V \simeq Y_V$.
\end{enumerate}
\end{thm}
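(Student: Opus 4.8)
\section*{Proof proposal for Theorem~\ref{thm:main}}

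The plan is to put the block matrices of Construction~\ref{cons:key} into closed form, recognize $X_V(1_z)$ as an explicit idempotent endomorphism of $\bigoplus_j W^z_j$ (where $W^x_j := \bigoplus_{i=1}^{\kappa^{j,x}} V(m^{j,x}_i)$) whose image is an isomorphic copy of $V(z)$, and then read off all three claims. First I would set up notation: for a generator $d_j$ and an object $x$, put $P^{j,x}_i := \sum_{p\colon x\to m^{j,x}_i}\alpha^{j,x}_i(p)V(p)\colon V(x)\to V(m^{j,x}_i)$ and $Q^{j,x}_i := \sum_{q\colon m^{j,x}_i\to x}\beta^{j,x}_i(q)V(q)\colon V(m^{j,x}_i)\to V(x)$, assemble these over $i$ into $P^{j,x}\colon V(x)\to W^x_j$ and $Q^{j,x}\colon W^x_j\to V(x)$, and set $e^{j}_{(x)} := \sum_i Q^{j,x}_i P^{j,x}_i = Q^{j,x}\circ P^{j,x}\colon V(x)\to V(x)$.

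The identity~\eqref{eq:ecc} for the pair $(d_j,x)$, multiplied by $V(g)$ and summed over all $g\colon d_j\to x$, rearranges to $e^{j}_{(x)}\circ V(f)=V(f)$ for every $f\colon d_j\to x$; hence $e^{j}_{(x)}$ restricts to the identity on $V_j(x):=\sum_{f\colon d_j\to x}\im V(f)$, and since $V$ is generated in degrees $\{d_i\}$ we have $V(x)=\sum_j V_j(x)$. The same rearrangement writes $e^{j}_{(x)}\circ V(f)$, for any $f\colon d_k\to x$, as a finite $k$-linear combination of maps $V(q\circ p\circ f)$ with $q\circ p\circ f\colon d_k\to x$; hence $e^{j}_{(x)}$ carries $V_k(x)$ into itself for \emph{all} $j$ and $k$. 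Now set $E^x_1:=\mathrm{id}$ and $E^x_j:=(\mathrm{id}-e^{1}_{(x)})\cdots(\mathrm{id}-e^{j-1}_{(x)})$. Since the $(r,s)$-entry of $U^{nj}$ is $P^{n,x}_r\circ Q^{j,x}_s$, an induction on the column index using the telescoping relation $\sum_{n=1}^{j-1}E^x_n e^{n}_{(x)}=\mathrm{id}-E^x_j$ yields the closed form: the $(i,j)$-block of $X_V(f)$ equals $P^{i,y}\circ V(f)\circ E^x_j\circ Q^{j,x}$.

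Introduce $\iota_z:=(P^{j,z})_j\colon V(z)\to\bigoplus_j W^z_j$ and $\pi_z\colon\bigoplus_j W^z_j\to V(z)$, $(w_j)_j\mapsto\sum_j E^z_j Q^{j,z}w_j$. The closed form says exactly $X_V(f)=\iota_y\circ V(f)\circ\pi_x$; in particular $X_V(1_z)=\iota_z\circ\pi_z$, and moreover $\pi_z\circ\iota_z=\sum_j E^z_j e^{j}_{(z)}=\sum_j(E^z_j-E^z_{j+1})=\mathrm{id}-E^z_{l+1}$. The crux is that $E^z_{l+1}=0$ on $V(z)$: if $v\in V_k(z)$, then one of the factors of $E^z_{l+1}$ is $\mathrm{id}-e^{k}_{(z)}$, which annihilates $v$, while every factor of $E^z_{l+1}$ preserves $V_k(z)$, so the whole product annihilates $v$; since $V(z)=\sum_k V_k(z)$ this forces $E^z_{l+1}=0$. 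Therefore $\pi_z\circ\iota_z=\mathrm{id}_{V(z)}$, so $X_V(1_z)=\iota_z\circ\pi_z$ is idempotent and is the projection of $\bigoplus_j W^z_j$ onto the direct summand $\iota_z(V(z))\cong V(z)$.

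With this in hand the three claims follow. For whatever epi-mono factorization $X_V(1_z)=A_V(z)\circ B_V(z)$ is selected in the construction, uniqueness of image factorizations supplies a unique isomorphism $\phi_z$ from the codomain of $B_V(z)$ to $V(z)$ with $A_V(z)=\iota_z\circ\phi_z$ and $\pi_z=\phi_z\circ B_V(z)$; then $B_V(z)\circ\iota_z=\phi_z^{-1}$ and $\pi_z\circ A_V(z)=\phi_z$, whence $Y_V(f)=B_V(y)\circ X_V(f)\circ A_V(x)=\phi_y^{-1}\circ V(f)\circ\phi_x$. This single identity shows that $f\mapsto Y_V(f)$ is a functor (it is conjugate to the functor $V$), proving~(2), and that the $\phi_z$ assemble into a natural isomorphism $Y_V\xrightarrow{\ \sim\ }V$, proving~(3). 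Claim~(1) is then immediate from the closed form: every linear map occurring in any $T^{ij}$, any $U^{ij}$, hence in $X_V(f)$ and in the chosen $A_V(z),B_V(z)$, is a $k$-linear combination of maps $V(h)$ for arrows $h$ whose source and target lie among the $m^{j,x}_s\in\mu(d_j)$ and $m^{i,y}_r\in\mu(d_i)$, all of which belong to $\mu(\{d_1,\dots,d_l\})$. The step I expect to be the real obstacle is the vanishing of $E^z_{l+1}$, which rests on the observation that each $V_k(z)$ is stable under \emph{all} the partial idempotents $e^{j}_{(z)}$, not merely under $e^{k}_{(z)}$; once that is in place, the telescoping collapse and the factorization bookkeeping are routine.
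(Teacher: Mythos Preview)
Your proof is correct and follows essentially the same route as the paper's: your $\iota_z$ and $\pi_z$ are precisely the paper's $A(z)$ and $B(z)$ (your closed form $B(z)_{1j}=E^z_j\circ Q^{j,z}$ is exactly what the paper's recursive definition of $B(z)_{1j}$ unwinds to, via the telescoping identity $\sum_{n<j}E^z_n e^n_{(z)}=\mathrm{id}-E^z_j$), and both arguments conclude by showing $\pi_z\circ\iota_z=\mathrm{id}$ and invoking uniqueness of epi--mono factorizations. The one organizational difference is that the paper proves $\pi_z\circ\iota_z=\mathrm{id}$ by induction on $l$, whereas you argue directly that $E^z_{l+1}=0$ via the stability of each $V_k(z)$ under all the $e^j_{(z)}$; these are the same computation, but your packaging with the explicit idempotent and the stability observation is arguably cleaner and makes the role of the effective context identity~\eqref{eq:ecc} more transparent.
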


\section*{Acknowledgements}
Thanks to Daniel Barter, Julian Rosen, Andrew Snowden, and David Speyer for useful conversations.  Thanks also to Jordan Ellenberg for suggesting that Theorem \ref{thm:Cauchy} might be true.  Thanks as well to math.stackexchange users Markus Scheuer and hypergeometric for verifying the claim at the end of Example \ref{ex:ptsets} via question \href{http://math.stackexchange.com/questions/1378443}{1378443}.  The author was supported by an NSF Graduate Research Fellowship (ID 2011127608).  This work contains results that later appeared in his 2016 Ph.D. thesis at the University of Michigan.  The author acknowledges support from the algebra RTG at the University of Wisconsin, NSF grant DMS-1502553.
\section{Supporting results and Examples}
\subsection{Useful properties of the relation $\leq_d$}
\begin{prop} \label{prop:invertible}
If $\Hom_{\mathcal{D}}(d, x)$ is finite, then $x \leq_d y$ if and only if the matrices $M_s$ contain an invertible matrix in their span.
\end{prop}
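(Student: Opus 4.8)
The plan is to split into the two implications, the forward one being essentially immediate: by Definition \ref{defn:preorder}, $x \leq_d y$ asserts that the \emph{identity} matrix lies in the $k$-span of the $M_s$, and the identity is invertible. So the content is entirely in the converse.

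For that direction, first I would record that the $k$-span of $\{M_s : s \in S(x,y)\}$ inside $\mathrm{Mat}_n(k)$, where $n = |\Hom_{\mathcal{D}}(d,x)|$, is closed under multiplication, hence is a (possibly non-unital) $k$-subalgebra $A$. The crucial identity is $M_s M_t = M_{t\circ s}$, which one checks at the level of matrix entries --- the $(f,h)$-entry of $M_s M_t$ is $1$ precisely when $t \circ s \circ f = h$ --- together with the fact that $t \circ s \in S(x,y)$ whenever $s, t \in S(x,y)$, since a factorization of $s$ and of $t$ through $y$ assembles into one for $t \circ s$. Now suppose $N \in A$ is invertible over $k$. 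Then $\det N$ is a unit, and the Cayley--Hamilton theorem (valid over any commutative ring) gives $N^n + c_{n-1}N^{n-1} + \cdots + c_1 N + c_0 I = 0$ with $c_0 = (-1)^n \det N$ a unit. Every term on the left except $c_0 I$ is a $k$-multiple of a positive power of $N$, and all such powers lie in $A$ by multiplicative closure; hence $c_0 I \in A$, and multiplying by $c_0^{-1}$ yields $I \in A$, that is, $x \leq_d y$.

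The only step needing any care is the multiplicative closure of the $M_s$ (together with stability of $S(x,y)$ under composition), since that is what forces the powers of $N$ to remain in the span; once that is in place the argument is just Cayley--Hamilton plus the fact that an invertible matrix has unit determinant. I would also note that the finiteness of $\Hom_{\mathcal{D}}(d,x)$ is exactly what makes $n$ finite, so that the characteristic polynomial and determinant are available; the degenerate case $n = 0$ is vacuous, the unique empty matrix being both the identity and invertible. Neither finiteness of $S(x,y)$ nor any chain condition on $k$ enters.
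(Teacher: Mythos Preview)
Your argument is correct and matches the paper's own proof exactly: the paper also observes that the $M_s$ are closed under multiplication and then invokes Cayley--Hamilton to conclude that the $k$-algebra generated by a finite invertible matrix contains the identity. Your write-up simply spells out the details (the identity $M_s M_t = M_{t\circ s}$, stability of $S(x,y)$ under composition, and the unit constant term) that the paper leaves implicit in its two-sentence proof.
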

\begin{proof}
The matrices $M_s$ are closed under multiplication, and the $k$-algebra generated by a finite invertible matrix contains the identity by the Cayley-Hamilton theorem.
\end{proof}

\begin{rem}[Probabilistic computation of the relation $\leq_d$]
If a random linear combination of the matrices $M_s$ is invertible, then $x \leq_d y$ by Proposition \ref{prop:invertible}; otherwise, for many reasonable methods of generating random linear combinations, it is likely that $s \not \leq_d y$.  Often, a bound on the probability of a false negative can be obtained via the Schwartz-Zippel-DeMillo-Lipton lemma \cite{Schwartz}.  This gives a practical method of experimentation when attempting to apply Theorem \ref{thm:detection}.
\end{rem}

We introduce concise notation for useful submodules of the $k$-linearized hom-sets of $\mathcal{D}$.  If $d_1, d_2, \ldots, d_l \in \mathcal{D}$ are objects, write $(d_1, d_2, \ldots, d_l)$ for the free $k$-module spanned by all maps $d_1 \longrightarrow d_l$ that can be written as a composite $d_1 \longrightarrow d_2 \longrightarrow \cdots \longrightarrow d_l$.  For example, $(x, y) = k \Hom_{\mathcal{D}}(x, y)$, and $(x, y, x) = k S(x, y)$.  Write $\cdot$ for the $k$-bilinearized composition law in $\mathcal{D}$, so $(x, y) \cdot (y, z) = (x, y, z)$, for example.  If $N \subseteq (x, y)$ is a submodule, define left and right annihilator submodules of $N$
\begin{align*}
\lann{N}{d} & = \{ f \in N : f \cdot (y, d) = 0 \} \\
\rann{d}{N} & = \{ f \in N : (d, x) \cdot f = 0 \}.
\end{align*}
We shall find useful the inclusion $(w,x) \cdot \rann{d}{(x,y)} \cdot (y, z) \subseteq \rann{d}{(w, x, y, z)}$.

\begin{prop}[Alternative characterization of the relation $\leq_d$]\label{prop:alt}
With the notation above, $x \leq_d y$ exactly when $\rann{d}{(x, x)} + (x, y, x) = (x, x)$.
\end{prop}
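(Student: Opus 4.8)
The plan is to recognize both sides of the claimed equivalence as statements about a single precomposition operator. For $f \in (x,x)$, let $L_f \colon k\Hom_{\mathcal{D}}(d,x) \longrightarrow k\Hom_{\mathcal{D}}(d,x)$ be the $k$-linear map with $L_f(\phi) = f \circ \phi$, extended linearly. Unwinding the definition of $M_s$, it is exactly the matrix of $L_s$ with respect to the basis $\Hom_{\mathcal{D}}(d,x)$ (up to transpose, which is harmless since the identity matrix is symmetric). Because $f \mapsto L_f$ is $k$-linear and $(x,y,x) = kS(x,y)$, the $k$-span of the matrices $M_s$ is exactly $\{\, L_b : b \in (x,y,x) \,\}$, and since $L_{1_x} = \mathrm{id}$, the condition $x \leq_d y$ is equivalent to the existence of $b \in (x,y,x)$ with $L_{1_x - b} = 0$. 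No finiteness is needed here: if $\Hom_{\mathcal{D}}(d,x)$ is infinite the $M_s$ are infinite matrices, but a finite $k$-combination equalling the identity still translates cleanly to $L_b = \mathrm{id}$.

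Next I would observe that $\{\, f \in (x,x) : L_f = 0 \,\} = \rann{d}{(x,x)}$: the equation $L_f = 0$ says $f \circ \phi = 0$ for every $\phi \in \Hom_{\mathcal{D}}(d,x)$, which is precisely the condition $(d,x) \cdot f = 0$ defining $\rann{d}{(x,x)}$. Combining this with the previous paragraph, $x \leq_d y$ holds if and only if $1_x - b \in \rann{d}{(x,x)}$ for some $b \in (x,y,x)$, that is, if and only if $1_x \in \rann{d}{(x,x)} + (x,y,x)$.

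It then remains to promote ``$1_x$ lies in the sum'' to ``the sum is all of $(x,x)$''. For this, note that both $\rann{d}{(x,x)}$ and $(x,y,x)$ are left ideals for the composition product $\cdot$ on the ring $(x,x)$: one has $(x,x)\cdot(x,y,x) = (x,x,y,x) \subseteq (x,y,x)$ directly, and $(x,x)\cdot\rann{d}{(x,x)} \subseteq \rann{d}{(x,x)}$ because any $g \in \rann{d}{(x,x)}$ satisfies $g \circ \eta = 0$ for every $\eta \in (d,x)$ (apply this with $\eta = h \circ \phi$). Since $1_x$ is an identity for $\cdot$, writing $1_x = j + b$ with $j \in \rann{d}{(x,x)}$ and $b \in (x,y,x)$ gives $a = a\cdot j + a\cdot b \in \rann{d}{(x,x)} + (x,y,x)$ for every $a \in (x,x)$; the reverse inclusion is trivial. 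This closes the chain of equivalences.

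The argument is essentially a definition chase, so I do not anticipate a genuine obstacle; the one place to be careful is the bookkeeping of composition-order conventions --- whether $a\cdot b$ means $b\circ a$, and correspondingly whether the relevant submodules are left or right ideals for $\cdot$ --- since reversing this would break the final promotion step. It is also worth recording, for use in Proposition \ref{prop:invertible} and elsewhere, that $f \mapsto L_f$ is a ring anti-homomorphism $(x,x) \longrightarrow \End_k\!\big(k\Hom_{\mathcal{D}}(d,x)\big)$ with kernel $\rann{d}{(x,x)}$, so that $(x,x)/\rann{d}{(x,x)}$ embeds as the subalgebra generated by the $M_s$'s together with the identity.
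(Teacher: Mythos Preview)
Your proof is correct and follows essentially the same route as the paper's: both identify the span of the $M_s$ with the image of $(x,y,x)$ under the post-composition action on $(d,x)$, observe that the kernel of this action is $\rann{d}{(x,x)}$, and then promote ``$1_x$ lies in the sum'' to ``the sum is all of $(x,x)$'' via the ideal property. The paper compresses the argument into three sentences (phrasing the key step as ``this algebra has a unit'') and notes the sum is a two-sided ideal, whereas you spell out each equivalence and use only the left-ideal property, which already suffices.
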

\begin{proof}
The matrices $M_s$ with $s \in S(x, y)$ give the action of $(x, y, x) \subseteq (x, x)$ on $(d, x)$ by post-composition.  Their span is a (possibly unitless) $k$-algebra isomorphic to $(x, y, x)/((x, y, x) \cap \rann{d}{(x, x)})$, by construction.  This algebra has a unit exactly when $1 \in \rann{d}{(x, x)} + (x, y, x)$, and the result follows since this sum is a two-sided ideal of $(x, x)$ containing $1$.
\end{proof}

\begin{obs}[Retracts and the relation $\leq_d$] \label{obs:retract}
If $x$ is a retract of $y$, then $x \leq_d y$ for any $d$.  If $x \leq_x y$, then $x$ is a retract of $y$.
\end{obs}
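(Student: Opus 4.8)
The plan is to read both implications off the definitions: the forward implication from the matrix description of $\leq_d$ (Definition \ref{defn:preorder}), and the converse from the reformulation of Proposition \ref{prop:alt}. In each case the point is simply to isolate the identity arrow $1_x$.

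\emph{Forward direction.} Suppose $x$ is a retract of $y$, so there are arrows $i \colon x \to y$ and $r \colon y \to x$ with $r \circ i = 1_x$. Then $1_x$ factors through $y$, i.e.\ $1_x \in S(x,y)$. For any $d$, the matrix $M_{1_x}$ has a $1$ in position $(f,g)$ exactly when $1_x \circ f = g$, that is, when $f = g$; hence $M_{1_x}$ is the identity matrix on $\Hom_{\mathcal{D}}(d,x)$. Being one of the generating matrices $M_s$, the identity matrix lies in their $k$-span, and so $x \leq_d y$. (Equivalently: $1_x \in S(x,y)$ makes $(x,y,x)$ a two-sided ideal of $(x,x)$ containing the unit, hence $(x,y,x) = (x,x)$, and the criterion of Proposition \ref{prop:alt} is met for every $d$.)

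\emph{Converse.} Suppose $x \leq_x y$. By Proposition \ref{prop:alt} with $d = x$, this means $\rann{x}{(x,x)} + (x,y,x) = (x,x)$. First I would note that $\rann{x}{(x,x)} = 0$: if $(x,x)\cdot f = 0$ then in particular $1_x \cdot f = f = 0$. So the relation collapses to $(x,y,x) = (x,x)$, i.e.\ $1_x \in (x,y,x) = k\,S(x,y)$. Now $S(x,y)$ is a subset of the basis $\Hom_{\mathcal{D}}(x,x)$ of the free $k$-module $(x,x)$, and $1_x$ is one of its basis elements; writing $1_x$ as a $k$-linear combination of elements of $S(x,y)$ and comparing the coefficient of $1_x$ (using $1 \neq 0$ in $k$) forces $1_x \in S(x,y)$. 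Thus $1_x = r \circ i$ for some $i \colon x \to y$ and $r \colon y \to x$, which exhibits $x$ as a retract of $y$.

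Both halves are elementary; the one step that genuinely needs a word of care is the coefficient comparison in the converse, which uses that $(x,x) = k\Hom_{\mathcal{D}}(x,x)$ is \emph{free} on the arrow set — so that a basis element lying in the $k$-span of a subset of the basis must belong to that subset — together with the standing assumption $k \neq 0$, without which the observation is false (then $x \leq_x y$ holds vacuously for all $x,y$).
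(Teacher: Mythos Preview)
Your proof is correct and follows the same path as the paper's: for the forward direction you observe $1_x\in S(x,y)$ so $M_{1_x}$ is the identity (with the same alternative via Proposition~\ref{prop:alt}), and for the converse you use $\rann{x}{(x,x)}=0$ to reduce to $(x,y,x)=(x,x)$ and extract $1_x\in S(x,y)$. You are simply more explicit than the paper about the last step---the coefficient comparison in the free module $(x,x)$ and the tacit hypothesis $k\neq 0$---which the paper leaves to the reader.
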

\begin{proof}
If $x$ is a retract of $y$, then $1 \in S(x, y)$, and $M_1 = 1$.  Alternatively, $(x, y, x) \subseteq (x, x)$ is a two-sided ideal containing $1$, and so $(x, y, x) = (x, x)$.  Similarly, if $x \leq_x y$, then $(x, y, x) = (x, x)$ since $\rann{x}{(x, x)} = 0$.
\end{proof}

\begin{prop} \label{lem:preorder}
Each relation $\leq_d$ is reflexive and transitive, and so defines a preorder. 
\end{prop}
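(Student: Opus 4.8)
The plan is to work with the algebraic reformulation of $\leq_d$ supplied by Proposition \ref{prop:alt}: the relation $x \leq_d y$ holds precisely when $\rann{d}{(x,x)} + (x,y,x) = (x,x)$.

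Reflexivity will be essentially automatic. One can argue directly that $1_x \in S(x,x)$ and that $M_{1_x}$ is the identity matrix, since $1_x \circ f = f$ for every $f \colon d \to x$; or, in the language of Proposition \ref{prop:alt}, every arrow $x \to x$ equals its composite with $1_x$, so $(x,x,x) = (x,x)$ and the defining equality $\rann{d}{(x,x)} + (x,x,x) = (x,x)$ is trivial.

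For transitivity, assume $x \leq_d y$ and $y \leq_d z$. First I would record the identity $(x,y,x) = (x,y)\cdot(y,y)\cdot(y,x)$, which holds because inserting $1_y$ gives one inclusion while composing the first two factors gives the other. Then I would substitute the relation $(y,y) = \rann{d}{(y,y)} + (y,z,y)$ coming from $y \leq_d z$, obtaining
$$
(x,y,x) \subseteq (x,y)\cdot\rann{d}{(y,y)}\cdot(y,x) \;+\; (x,y)\cdot(y,z,y)\cdot(y,x).
$$
The second summand is $(x,y,z,y,x)$, and since any factorization $x \to y \to z \to y \to x$ collapses to a factorization $x \to z \to x$, it sits inside $(x,z,x)$. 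For the first summand I would invoke the annihilator inclusion $(w,x)\cdot\rann{d}{(x,y)}\cdot(y,z) \subseteq \rann{d}{(w,x,y,z)}$ recorded just before Proposition \ref{prop:alt}, with the objects chosen so that $(x,y)\cdot\rann{d}{(y,y)}\cdot(y,x) \subseteq \rann{d}{(x,y,y,x)}$; since $(x,y,y,x) \subseteq (x,x)$ and $\rann{d}{(-)}$ is monotone under inclusion of submodules, this lies in $\rann{d}{(x,x)}$. Combining, $(x,y,x) \subseteq \rann{d}{(x,x)} + (x,z,x)$, and feeding this back into the hypothesis $(x,x) = \rann{d}{(x,x)} + (x,y,x)$ yields $(x,x) = \rann{d}{(x,x)} + (x,z,x)$, which by Proposition \ref{prop:alt} is exactly $x \leq_d z$.

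I do not expect a serious obstacle here. The only things needing care are the bookkeeping with the multi-index notation $(\,\cdot\,,\ldots,\,\cdot\,)$ — that collapsing consecutive factors makes $(x,y,y,x)$ a submodule of $(x,x)$ and $(x,y,z,y,x)$ a submodule of $(x,z,x)$ — and recognizing that the annihilator inclusion from the text is precisely the tool that absorbs the stray term $(x,y)\cdot\rann{d}{(y,y)}\cdot(y,x)$ into $\rann{d}{(x,x)}$. Once those points are in place, transitivity is a two-line ideal manipulation.
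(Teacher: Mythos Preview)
Your proposal is correct and follows essentially the same route as the paper's proof: both use Proposition \ref{prop:alt}, multiply the relation $(y,y)=\rann{d}{(y,y)}+(y,z,y)$ by $(x,y)$ and $(y,x)$, absorb the resulting $(x,y)\cdot\rann{d}{(y,y)}\cdot(y,x)$ into $\rann{d}{(x,x)}$ via the annihilator inclusion, and collapse $(x,y,z,y,x)\subseteq(x,z,x)$. The only cosmetic difference is that the paper cites Observation \ref{obs:retract} for reflexivity, while you argue it directly.
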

\begin{proof}
We have $x \leq_d x$ by Observation \ref{obs:retract} since any $x$ is a retract of itself.  Suppose $x \leq_d y$ and $y \leq_d z$.  Using Proposition \ref{prop:alt}, we get two equations
\begin{align*}
\rann{d}{(x, x)} + (x, y, x) &= (x, x) \\
\rann{d}{(y, y)} + (y, z, y) &= (y, y).
\end{align*}
Multiplying the second equation by $(x, y)$ on the left and $(y, x)$ on the right,
\begin{align*}
(x, y) \cdot \rann{d}{(y, y)} \cdot (y, x) + (x, y) \cdot (y, z, y) \cdot (y, x) & = (x, y) \cdot (y, y) \cdot (y, x) \\
 (x, y) \cdot \rann{d}{(y, y)} \cdot (y, x) + (x, y, z, y, x) & = (x, y, x).
\end{align*}
Substituting this formula for $(x, y, x)$ into the first equation,
$$
\rann{d}{(x, x)} +  (x, y) \cdot \rann{d}{(y, y)} \cdot (y, x) + (x, y, z, y, x) = (x, x).
$$
We have a chain of inclusions
\begin{align*}
(x, x) & = \rann{d}{(x, x)} +  (x, y) \cdot \rann{d}{(y, y)} \cdot (y, x) + (x, y, z, y, x) \\
& \subseteq \rann{d}{(x, x)} + \rann{d}{(x, y, x)} + (x, z, x) \\
& \subseteq \rann{d}{(x, x)} + \rann{d}{(x, x)} + (x, z, x) \\
& \subseteq \rann{d}{(x, x)} + (x, z, x) \\
& \subseteq (x, x),
\end{align*}
proving that $\rann{d}{(x, x)} + (x, z, x) = (x,x)$, and $x \leq_d z$ by Proposition \ref{prop:alt}.
\end{proof}

\begin{defn}
Write $x \leq^d y$ whenever $x \leq_d y$ in the opposite category $\mathcal{D}^{op}$.
\end{defn}

\begin{prop} \label{prop:monoann}
If $c \leq^x d$, then $\rann{d}{(x,y)} \subseteq \rann{c}{(x,y)}$.
\end{prop}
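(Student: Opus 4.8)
The plan is to rewrite the hypothesis $c\leq^x d$ as a decomposition of the identity endomorphism $1_c\in(c,c)$, and then to push an arbitrary element of $\rann{d}{(x,y)}$ through that decomposition.

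First I would unwind the hypothesis using Proposition~\ref{prop:alt}. By definition $c\leq^x d$ means $c\leq_x d$ in $\mathcal{D}^{op}$, so that proposition, applied to $\mathcal{D}^{op}$, gives $\rann{x}{(c,c)}+(c,d,c)=(c,c)$ with all three modules formed in $\mathcal{D}^{op}$. Translating back to $\mathcal{D}$: reversing a string of objects leaves $(c,c)$ and $(c,d,c)$ unchanged, while the right annihilator $\rann{x}{(c,c)}$ of $\mathcal{D}^{op}$ becomes the \emph{left} annihilator $\lann{(c,c)}{x}$ of $\mathcal{D}$, because pre- and post-composition trade places under $op$. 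Hence $c\leq^x d$ is equivalent to
\[
\lann{(c,c)}{x}+(c,d,c)=(c,c)\qquad\text{in }\mathcal{D}.
\]
The one genuinely delicate point of the argument is this $\mathcal{D}$/$\mathcal{D}^{op}$ dictionary; I would nail it down by spelling out what the matrices $M_s$ of Definition~\ref{defn:preorder} compute in $\mathcal{D}^{op}$, or equivalently by recording the $op$-dual of Proposition~\ref{prop:alt} once and for all.

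Granting this, write $1_c=e+g$ with $e\in\lann{(c,c)}{x}$ and $g\in(c,d,c)$, and fix $f\in\rann{d}{(x,y)}$. To conclude $f\in\rann{c}{(x,y)}$ it suffices to show $h\cdot f=0$ for every $h\in(c,x)$. Since $e\cdot(c,x)=0$ by the definition of $\lann{(c,c)}{x}$, the decomposition $h=1_c\cdot h=e\cdot h+g\cdot h$ collapses to $h=g\cdot h\in(c,d,c)\cdot(c,x)=(c,d,c,x)$. A composite $c\to d\to c\to x$ is in particular a composite $c\to d\to x$, so $(c,d,c,x)\subseteq(c,d,x)=(c,d)\cdot(d,x)$, whence $h\cdot f\in(c,d)\cdot\big((d,x)\cdot f\big)=0$, the last equality being exactly the hypothesis $f\in\rann{d}{(x,y)}$. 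Thus $(c,x)\cdot f=0$, i.e.\ $f\in\rann{c}{(x,y)}$.

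Beyond the $op$-bookkeeping of the first step, these manipulations are of the same ideal-theoretic flavor as those in the proof of Proposition~\ref{lem:preorder}, and I expect no further obstacle. As a sanity check, when $c$ is a retract of $d$ one has $c\leq^x d$ for every $x$ (retracts are self-dual; compare Observation~\ref{obs:retract}), and the inclusion then amounts to precomposing with a section $c\to d$, which is exactly the above with $g$ a single composite $c\to d\to c$.
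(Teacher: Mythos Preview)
Your argument is correct and is essentially the paper's own proof. The only cosmetic difference is that you decompose the single element $1_c=e+g$ and chase a fixed $h\in(c,x)$, whereas the paper works at the level of submodules, writing $(c,x)\cdot f=(c,c)\cdot(c,x)\cdot f=[\lann{(c,c)}{x}+(c,d,c)]\cdot(c,x)\cdot f$ and then collapsing exactly as you do via $(c,d,c,x)\subseteq(c,d,x)$ and $(d,x)\cdot f=0$; the $\mathcal{D}/\mathcal{D}^{op}$ translation you spell out is also the one the paper uses.
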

\begin{proof}
By Proposition \ref{prop:alt}, $\lann{(c, c)}{x} + (c, d, c) = (c, c)$. Let $f \in (x, y)$, and suppose $(d, x) \cdot f = 0$.  Then
\begin{align*} 
(c, x) \cdot f & = (c, c) \cdot (c, x) \cdot f \\
& = \lbrack \lann{(c, c)}{x} + (c, d, c) \rbrack \cdot (c, x) \cdot f \\
& = \lann{(c, c)}{x} \cdot (c, x) \cdot f + (c, d, c) \cdot (c, x) \cdot f \\
& = 0 + (c, d, c, x) \cdot f \\
& = (c, d) \cdot (d, c, x) \cdot f \\
& \subseteq (c, d) \cdot (d, x) \cdot f \\
& \subseteq 0,
\end{align*}
and the claim is proved.
\end{proof}

\begin{lem} \label{lem:loweringsubscripts}
If $x \leq_d y$ and $c \leq^x d$, then $x \leq_{c} y$.
\end{lem}
\begin{proof}
By Proposition \ref{prop:monoann}, $\rann{d}{(x,x)} \subseteq \rann{c}{(x,x)}$, and by Proposition \ref{prop:alt}, $\rann{d}{(x, x)} + (x, y, x) = (x, x)$.  So
$$
(x, x) =  \rann{d}{(x, x)} + (x, y, x) \subseteq \rann{c}{(x, x)} + (x, y, x) \subseteq (x, x)
$$
and $x \leq_c y$ as required.
\end{proof}
\subsection{Examples: dimension zero categories, homological moduli, effective contexts}
\begin{ex} (Homological modulus for $\Delta$) \label{ex:delta}
Since $\Delta$ is dimension zero over any Artinian ring by the Dold-Kan correspondence, it comes as no surprise that $\Delta$ admits a homological modulus over $\mathbb{Z}$.  We show that one may take $\mu(d) = \{ d+1 \}$.  In a moment, we shall give a direct argument that for all $n \in \Delta$, 
$$n+2 \leq_n n+1.$$
Assuming this lemma, the following deductions finish the proof.  Since $d$ is a retract of any $n \geq d$, we know by Observation \ref{obs:retract} that $d \leq^{n+2} n$, from which it follows by Lemma \ref{lem:loweringsubscripts} that $n + 2 \leq_{d} n + 1$.  By Lemma \ref{lem:preorder}, $\leq_{d}$ is a transitive relation, and so $d+1$ is a maximum for the preorder $\leq_{d}$.

We now seek to prove that $n+2 \leq_n n+1.$  Define the set
$$
H = \left\{ h \in \Hom_{\Delta}(n+2, n+2) : i \leq h(i) \leq i+1 \mbox{ for all $i$} \right\},
$$
and for each $k \in \{1, \ldots, n+1\}$, let $\iota_k$ be the involution of $H$ so that $\iota_k(h)(i) = h(i)$ when $i \neq k$ and $\iota_k(h)(k) = 2k + 1 - h(k)$.  In other words, $\iota_k$ flip-flops function values at $k$ between $k$ and $k+1$.  Define a function $\rho : H \longrightarrow \mathbb{Z}$, $\rho(x) = \prod_{i = 1}^{n+2} (-1)^{h(i)-i}$.  It is immediate that each $\iota_k$ satisfies $\rho(\iota_k(x)) = -\rho(x)$.  Let $f, g \in \Hom_{\Delta}(n, n+2)$, and consider the sum
$$
\sigma = \sum_{\substack{h \in H \\ h \circ f = g}} \rho(h).
$$
Choosing any $k \not \in \im(f)$, $k \neq n+2$, we see that $h \circ f = \iota_k(h) \circ f$, and so $\iota_k$ permutes the terms of the sum, leaving $\sigma$ invariant.  On the other hand, each term of $\sigma$ is anti-invariant under $\iota_k$.  It follows that $\sigma = 0$.  Noting that every element of $H$ factors through $n+1$ except for the identity function on $n+2$, we see that the identity matrix shows up exactly once in $\sum_{h \in H} \rho(h) M_h = 0$, and every other term has $h \in S(n+2, n+1)$.  This proves that the identity matrix is in the span of the $M_s$ with $s \in S(n+2, n+1)$, and so $n+2 \leq_n n+1.$
\end{ex}

\begin{ex}[The category $\Fin_*$ is dimension zero over any Artinian ring]
By a result of Pirashvili \cite[Theorem 3.1]{Gamma}, representations of the category of finite pointed sets $\Fin_*$ that send $\ast$ to zero are equivalent to representations of $\Omega$, the category of nonempty finite sets with surjections.  Since maps in $\Omega$ go from larger sets to smaller sets, finitely generated $\Omega$-representations are finite length.  That $\Fin_*$ is dimension zero over any Artinian ring is not much harder than Pirashvili's result.  We give a different proof in the next example.
\end{ex}

\begin{ex}[Effective context for $\Fin_*$]\label{ex:ptsets}
Use the notation $n_* = \{*, 1, 2, \ldots, n\}$ for a finite pointed set with $n$ elements along with a basepoint.  We give an effective context refining the homological modulus $\mu(d_*) = \{0_*, 1_*, \ldots, d_*\}$.  Set $\kappa^{d_*,n_*} = \sum_{i = 0}^d \binom{n}{i} $ and $m^{d_*,n_*} = (0_*, 1_*, 1_* \ldots, 1_*, 2_*,  \ldots, 2_*, \ldots, d_*, \ldots, d_*)$, the tuple of pointed subsets of $n_*$ with up to $d$ elements besides the basepoint.  If $p_* \subseteq n_*$ is a pointed subset of $n_*$, set $\alpha^{d_*, p_*}(f) = (-1)^{d-p}$ if $f$ is the unique surjection $n_* \twoheadrightarrow p_*$ restricting to the identity on $p_*$ and sending every other element to the basepoint, and $\alpha^{d_*, p_*}(f) =0$ otherwise.  Similarly, set $\beta^{d_*, p_*}(g) = \binom{n-p-1}{d-p}$ if $g$ is the natural injection $p_* \hookrightarrow n_*$, and $0$ otherwise.  One may verify that (\ref{eq:ecc}) holds, and so these choices give an effective context.
\end{ex}

\begin{ex}[The category $\catname{FI\#}$ is dimension zero over any Artinian ring]
The category $\catname{FI\#} \subseteq \Fin_*$ of pointed set maps injective away from the basepoint was introduced by Church, Ellenberg, and Farb, who show that it has a bounded model given by the $k$-linearization of the category of finite sets and bijections \cite[Theorem 4.1.5]{CEF}.  It follows that $\catname{FI\#}$ is dimension zero over any commutative Artinian ring.
\end{ex}

\begin{ex}[Effective context for $\catname{FI\#}$]
The effective context produced for $\Fin_*$ in Example \ref{ex:ptsets} is supported on the subcategory of functions that are injective away from the basepoint, so the effective context given earlier for $\Fin_*$ works equally well for $\catname{FI\#}$.
\end{ex}

\begin{ex}[A homological modulus for $\Fin$]\label{ex:finfl}
The paper \cite{UniformlyPresentedVectorSpaces} shows that, over the rationals, $\Fin$ has a homological modulus given by $\mu(0) = \{0, 1 \}$, $\mu(d) = \{d+1\}$ for $d \neq 0$ (although the results of the present paper allow us to deduce this result for any field from Example \ref{ex:delta}).  Over $\mathbb{Q}$, a basic projective splits as a direct sum $\mathbb{Q} \Hom_{\Fin}(k, -) = \Lambda^k \oplus \Theta^k$, where $\Lambda^k$ is the anti-invariants for the $S_k$-action by pre-composition, and $\Theta^k$ is a complementary representation.  Any nonzero representation admits a nonzero map from one of these representations, so the opposite of the full subcategory spanned by these projectives has the same representation theory as $\Fin$.  Using this notation, the paper \cite{UniformlyPresentedVectorSpaces} also gives the more precise result that one may take $\mu(\Lambda^k) = \{\Lambda^k, \Lambda^{k+1}\}$ and $\mu(\Theta^k) = \{\Lambda^k, \Theta^k\}$.
\end{ex}

\begin{ex}[Noncommutative finite sets are dimension zero]
In \cite{JSEJWG}, Pirashvili-Richter's category of noncommutative finite sets \cite{HochFunctorHomology} is shown to be dimension zero over any field, and the simples in characteristic zero are deduced from \cite{UniformlyPresentedVectorSpaces} (these simples appeared earlier in \cite{rains}).
\end{ex}

\begin{ex}[The category $\catname{Rel}$ of finite sets with relations is dimension zero]
Andrew Gitlin has produced a homological modulus over $\mathbb{Q}$ for the category $\catname{Rel}$ of finite sets with relations, wherein $\mu(d) = \{2^d\}$.  This is the first example known to the author of a homological modulus that is not ``linear in $d$.''  The irreducible representations over $\mathbb{Q}$ have now been constructed by Serge Bouc and Jacques Th\'evenaz \cite{bouc}.
\end{ex}

\begin{ex}[Homological modulus for $\catname{Vect}_{\mathbb{F}_q}$]
The category of finite dimensional vector spaces over a finite field of characteristic $p$ admits a homological modulus over $\mathbb{Z}[\frac{1}{p}]$ by work of Kuhn \cite{KuhnIdempotents}, who relies on idempotents constructed much earlier by Kov{\'a}cs  \cite{KovacsIdempotents}.  Working in a skeleton for $\catname{Vect}_{\mathbb{F}_q}$ where objects are natural numbers and morphisms are $\mathbb{F}_q$-matrices, we may take $\mu(n) = \{ n \}$.  Indeed, the two-sided ideal $(n+1, n, n+1) \subseteq ( n+1, n+1)$ is generated by an explicit idempotent that acts as an identity for this ideal considered as a subalgebra, and so witnesses $n + 1 \leq_n n$.  Imitating the argument given at the beginning of Example \ref{ex:delta} finishes the proof.
\end{ex}

\begin{ex}[Connection to $\catname{FI}$-modules and Noetherian categories]
A category $\mathcal{D}$ is called Noetherian over a commutative ring $k$ if subrepresentations of finitely generated representations are finitely generated.  Certainly any dimension zero category is also Noetherian.  A nice example of a Noetherian category is the category of finite sets with injections, $\catname{FI}$.  This category was shown to be Noetherian over $\mathbb{Q}$ in \cite{CEF}, and later over any Noetherian ring in \cite{CEFN}.  Over the rationals, Steven Sam and Andrew Snowden \cite[Corollary 2.2.6]{GLEquivariantInfinite} prove that this category has dimension one: after modding out by the Serre subcategory of finite length representations, the remaining representations have finite length.  In \cite{SSNoetherian}, they provide Gr\"obner-style techniques for proving categories are Noetherian.  They are also able to recover the result 
that $\Fin$ is dimension zero over any commutative Artinian ring. 
\end{ex}


\section{Proofs}
\subsection{Proof of Theorem \ref{thm:main}} For each $z \in \mathcal{D}$, we shall construct a particular choice of matrices $A(z), B(z)$ satisfying $B(z) \circ A(z) = 1$ so that the block matrix $X$ from Construction~\ref{cons:key} factors 
$$X(f) = A(y) \circ V(f) \circ B(x).$$
This is enough to prove the result since epi-mono factorizations are unique up to isomorphism.  For every $i \in \{1, \ldots, l\}$ and $z \in \mathcal{D}$, define $C^i(z)$, a $\kappa^{i,z} \times 1$ block matrix with $(r, 1)$ entry
$$
C^i(z)_{r1} = \sum_{p \; : \; z \longrightarrow m^{i,z}_r} \alpha^{i,z}_r(p) \cdot V(p).
$$
Similarly, for every $j \in \{1, \ldots, l\}$ and $z \in \mathcal{D}$, define $D^i(z)$, a $1 \times \kappa^{j,z}$ block matrix whose $(1, s)$ entry is
$$
D^j(z)_{1s} = \sum_{q \; : \; m^{j,z}_s \longrightarrow z} \beta^{j,z}_s(q) \cdot V(q).
$$
Define an $l \times 1$ block matrix $A(z)$ so its $(i,1)$ entry is $A(z)_{i1} = C^i(z)$.  Define a $1 \times l$ block matrix $B(z)$ inductively with $B(z)_{11} = D^1(z)$ and subsequent entries given by
$$
B(z)_{1j} = D^j(z) - \sum_{n = 1}^{j-1} B(z)_{1n} \circ C^n(z) \circ D^j(z).
$$
It is straight-forward that $X(f) = A(y) \circ V(f) \circ B(x)$.  We show $B(z) \circ A(z) = 1$ holds by induction on $l$.  Let us establish the claim for $l=1$.  In this case, $V$ is generated in degrees $\{ d_1 \}$ (a singleton set) and so it suffices to prove $B(z) \circ A(z) \circ V(h) = V(h)$ for all $z$ and all $h:d_1 \longrightarrow z$.
\begin{align*}
B(z) \circ A(z) \circ V(h) &=  D^1(z) \circ C^1(z)  \circ V(h) \\
&= \left( \sum_{r = 1}^{\kappa^{1,z}}  \;\; \sum_{\substack{p \; : \; z \longrightarrow m_r \\ q \; : \; m_r \longrightarrow x }} \beta^{1,z}_r(q) \cdot  \alpha^{1,z}_r(p) \cdot V(q \circ p)   \right) \circ V(h) \\
&= \sum_{r = 1}^{\kappa^{1,z}}  \;\; \sum_{\substack{p \; : \; z \longrightarrow m_r \\ q \; : \; m_r \longrightarrow z }}   \alpha^{1,z}_r(p) \cdot \beta^{1,z}_r(q) \cdot V(q \circ p \circ h) \\
&=  \sum_{r = 1}^{\kappa^{1,z}} \;\; \sum_{g\; : \; d_1 \longrightarrow z} \;\; \sum_{\substack{p \; : \; z \longrightarrow m_r \\ q \; : \; m_r \longrightarrow z \\ q \circ p \circ h = g}} \alpha^{1,z}_r(p) \cdot \beta^{1,z}_r(q) \cdot V(g) \\
&= \sum_{g\; : \; d_1 \longrightarrow z} \;\; \left( \sum_{r = 1}^{\kappa^{1,z}}   \;\; \sum_{\substack{p \; : \; z \longrightarrow m_r \\ q \; : \; m_r \longrightarrow z \\ q \circ p \circ h = g}} \alpha^{1,z}_r(p) \cdot \beta^{1,z}_r(q) \right) \cdot V(g) \\
&= V(h).
\end{align*}
Assume the result for $l$; we proceed with the case of $l+1$, for which there is a new object $d_{l+1}$ at the end of the list of generator degrees.  Performing the construction again, we obtain new matrices $A^+(z)$ and $B^+(z)$ that are slightly larger than the previous matrices $A(z)$ and $B(z)$.  By construction, the final entry of $B^+(z)$ can be expressed using the previous entries:
\begin{align*}
B^+(z)_{1,(l+1)} &=  D^{l+1}(z) - \sum_{n = 1}^{l} B(z)_{1n} \circ C^n(z) \circ D^{l+1}(z) \\
&= D^{l+1}(z) - B(z) \circ A(z) \circ D^{l+1}(z) \\
&= \left[ 1 - B(z) \circ A(z) \right] \circ  D^{l+1}(z).
\end{align*}
Consequently,
\begin{align*}
B^+(z) \circ A^+(z) &= B(z) \circ A(z) + B^+(z)_{1,(l+1)} \circ A^+(z)_{(l+1),1} \\
&= B(z) \circ A(z) + \left[1 - B(z) \circ A(z) \right] \circ  D^{l+1}(z) \circ C^{l+1}(z).
\end{align*}
Given any map $h : d_i \longrightarrow z$ with $i \leq l$, 
\begin{align*}
B^+(z) \circ A^+(z) \circ V(h) &= B(z) \circ A(z) \circ V(h) + \left[1 - B(z) \circ A(z) \right] \circ  D^{l+1}(z) \circ C^{l+1}(z) \circ V(h) \\
&= V(h) + \left[1 - B(z) \circ A(z) \right] \circ \left( \,\sum_{g \; : \; d_i \longrightarrow z} \gamma_g  \cdot V(g) \right)\\
&= V(h),
\end{align*}
where the coefficients $\gamma_g$ expanding $D^{l+1}(z) \circ C^{l+1}(z) \circ V(h)$ do not matter since every term in the sum is anyhow annihilated by $\left[1 - B(z) \circ A(z) \right]$ using the inductive hypothesis.  Given instead a map $h : d_{l+1} \longrightarrow z$, then a reprise of the computation for the base case gives $D^{l+1}(z) \circ C^{l+1}(z) \circ V(h) = V(h)$ and so
\begin{align*}
B^+(z) \circ A^+(z) \circ V(h) &= B(z) \circ A(z) \circ V(h) + \left[1 - B(z) \circ A(z) \right] \circ  D^{l+1}(z) \circ C^{l+1}(z) \circ V(h) \\
&= B(z) \circ A(z) \circ V(h) + \left[1 - B(z) \circ A(z) \right] \circ V(h)\\
&= V(h).
\end{align*}

\subsection{Proof of Proposition \ref{prop:refinement}, Theorem \ref{thm:determined}}
\begin{proof}
Let us begin by showing that any homological modulus $\mu$ can be refined to an effective context.  For any pair of objects $x, y \in \mathcal{D}$, we have a surjection $\psi^{x,y} : \Hom_{\mathcal{D}}(x, y) \times \Hom_{\mathcal{D}}(y, x) \longrightarrow S(x, y)$ sending a pair of arrows to its composition (which is necessarily a self-map of $x$ that factors through $y$.)  Choose sections $\sigma^{x,y}$ satisfying $\psi^{x,y} \circ \sigma^{x,y} = 1$.  By definition, a statement of the form $x \leq_d y$ provides ring elements $\rho(s) \in k$ with $\sum_{s \in S(x,y)} \rho(s) \cdot M_s = 1$; our sections give 
$$
\sum_{s \in S(x,y)} \;\; \sum_{\substack{p \; : \; x \longrightarrow y \\ q \; : \; y \longrightarrow x \\ q \circ p = s}} \alpha_s(p) \cdot \beta_s(q) \cdot M_{s} = 1,
$$
taking $\alpha_s(p) =
\left\{
     \begin{array}{lr}
       \rho(s) & : \sigma^{x,y}(s)_1 = p \\
       0 & : \sigma^{x,y}(s)_1 \neq p
     \end{array}
   \right.$ and  $\beta_s(q) =
\left\{
     \begin{array}{lr}
       1 & : \sigma^{x,y}(s)_2 = q \phantom{.} \\
       0 & : \sigma^{x,y}(s)_2 \neq q.
     \end{array}
   \right.$

The only nonzero terms in the sum occur when $\rho(s) \neq 0$ which happens only finitely often; taking $\kappa = \# \{s : \rho(s) \neq 0 \}$ completes the proof of Proposition \ref{prop:refinement}. 

A standard application of Zorn's lemma to build the isomorphism deduces Theorem \ref{thm:determined} from Proposition \ref{prop:refinement} and Theorem \ref{thm:main}.

\subsection{Proof of Corollaries \ref{cor:Noetherian} and \ref{cor:exts}}

If $V$ is generated in degrees $\{ d_i \}_{i \in I}$, we now show that any subrepresentation $W \hookrightarrow V$ must be generated in degrees $\mu( \{ d_i \}_{i \in I} )$.  For any $z \in \mathcal{D}$, the matrices $A(z)$ and $B(z)$ associated to $V$ that were built at the outset of the proof of Theorem \ref{thm:main} are block matrices filled with linear combinations of various induced maps $V(p)$.  Write $A'(z)$ and $B'(z)$ for corresponding block matrices that use the same block structure and coefficients, but $W(p)$ in place of $V(p)$; these matrices are designed to commute with the inclusion $i : W \hookrightarrow V$.  We have
$$
i \circ B'(z) \circ A'(z) = B(z) \circ A(z) \circ i = 1 \circ i = i \circ 1,
$$
and so $B'(z) \circ A'(z) = 1$ by cancellation of the monomorphism $i$.  It follows immediately that $\im (B'(x)) = Wx$, proving that $W$ is generated in degrees $\mu( \{ d_i \}_{i \in I} )$.  Since the kernel of a map from a representation generated in finitely many degrees is again generated in finitely many degrees, such representations form an abelian subcategory.  The proof of Corollary~\ref{cor:Noetherian} is complete.

Computation of $\Ext$ in the category of representations is aided by an explicit collection of enough projectives: the $k$-linearized representable functors.  In detail, for any $d \in \mathcal{D}$ we have a representable functor $\Hom_{\mathcal{D}}(d, -) \colon \mathcal{D} \to \catname{Set}$.  Post-composing with the free $k$-module functor, we obtain a representation $P^d$ satisfying
$$
P^d (x) = k \Hom_{\mathcal{D}}(d, x).
$$
Yoneda's lemma gives that $\Hom(P^d, V) \simeq V(d)$ for any representation $V$.  This also shows that $P^d$ is projective, since the functor $\Hom(P^d, -)$ is evaluation at $d$, which is exact.  Finally, a representation is generated in degree $d$ if and only if it is a quotient of a direct sum of copies of $P^d$.  These observations may be summarized by saying that $P^d$ is the representation freely generated by a vector in degree $d$---specifically, $1_d \in k \Hom_{\mathcal{D}}(d, d) = P^d (d)$.

Since $V$ is generated in degrees $\{ d_i \}_{i \in I}$, it is a quotient of $P_0 = \oplus_{i \in I} (P^{d_i})^{\oplus t_i}$, where the $t_i$ are cardinals indexing the generators in degree $d_i$.  The kernel of the natural surjection $P_0 \to V$ is generated in degrees $\mu(\{ d_i \}_{i \in I})$ by Corollary~\ref{cor:Noetherian} since it is a subrepresentation of $P_0$, which is generated in degrees $\{ d_i \}_{i \in I}$.  Iterating this construction, we obtain a projective resolution
$$
\cdots \longrightarrow P_2 \longrightarrow P_1 \longrightarrow P_0 \longrightarrow V \longrightarrow 0,
$$
where each $P_p$ is a direct sum of various $P^d$ with $d \in \mu^{p}(\{ d_i \}_{i \in I})$.  When $p \leq q + 1$, every such $d$ is in the subcategory $\mathcal{E}_q$.  Since $\mathcal{E}_q$ is full, $\Hom_{\mathcal{E}_q}(d, x) = \Hom_{\mathcal{D}}(d, x)$ for any $d, x \in \mathcal{E}_q$, and so $P^d$ is still projective when restricted to the subcategory $\mathcal{E}_q$ since it coincides with the $k$-linearized representable $k \Hom_{\mathcal{E}_q}(d, -)$.  We conclude that the first $k+1$ steps of the resolution $P_{\bullet} \to V$, when restricted to $\mathcal{E}_q$, are once again the first $k+1$ steps of a projective resolution.

By Yoneda's lemma, the cochain complex computing $\Ext^{q}_{\mathcal{D}}(V, W)$ using the original resolution coincides with the one computing $\Ext^{q}_{\mathcal{E}_q}(V', W')$ using the restricted resolution, and so these two $k$-modules are isomorphic.

If $k$ is Noetherian, $V$ is finitely generated, $W'$ takes values in finitely generated $k$-modules, and the hom-sets $\Hom_{\mathcal{E}_q}(x, y)$ are finite, then the cochain complex consists of finitely generated $k$-modules and so $\Ext^{q}_{\mathcal{D}}(V, W)$ is finitely generated as a $k$-module, proving Corollary~\ref{cor:exts}.
\end{proof}

\subsection{Proof of Theorems \ref{thm:detection} and \ref{thm:Cauchy}}
\begin{proof}
One direction of Theorem \ref{thm:detection} is a consequence of Corollary \ref{cor:Noetherian}: if $\mathcal{D}$ admits a homological modulus over $k$, then any subrepresentation of a representation $V$ generated in degrees $\{ d_i \}_{i \in I}$ is generated in degrees $\mu( \{ d_i \}_{i \in I})$.  It follows that the length of any such $V$ cannot exceed the length of the module $\bigoplus_{m \in \mu( \{ d_i \}_{i \in I})} Vm$.  Since $V$ is finitely generated and $\Hom_{\mathcal{D}}(x, y)$ is always finite, each $Vm$ is finitely generated, and hence finite length since $k$ is Artinian.

In the other direction, assume that $\mathcal{D}$ is dimension zero over $k$.  We first argue that every monoid algebra $k \End_{\mathcal{D}}(d)$ is Artinian, from which it follows by the result of Zel$'$manov \cite{Zelmanov} that $k$ is Artinian and $\End_{\mathcal{D}}(d)$ is finite (see Okni{\'n}ski's book \cite[p. 172, Theorem 23]{SemigroupAlgebras} for an English account).  Next we argue that every hom-set $\Hom_{\mathcal{D}}(x, y)$ is finite as well.  Finally, we use the theory of projective covers to construct a homological modulus for $\mathcal{D}$.

Let $W$ be a representation of $\End_{\mathcal{D}}(d)$.  Since $\mathcal{D}$ is small and $\RMod$ is cocomplete, we may left Kan extend $W$ from $\End(d)$ to $\mathcal{D}$, getting a representation $V = \Lan W$.  Since $\End(d)$ is a full subcategory of $\mathcal{D}$, $V$ satisfies $\Res V = W$.  Taking $W = k \End(d)$ the regular representation, we see that $\Lan W = \Lan_d^{\mathcal{D}} k = P^d$, the basic projective $\mathcal{D}$-representation freely generated by a single vector in degree $d$.  Any strictly increasing chain of subrepresentations of $k \End(d)$ gives rise to a corresponding chain in $P^d$, which must be finite by assumption.  It follows that each monoid algebra $k \End(d)$ is Artinian, and so $k$ is Artinian and $\End(d)$ is finite by the result of Zel$'$manov.

A similar argument gives that every $k\Hom_{\mathcal{D}}(x, y)$ is Artinian as a right $k\End_{\mathcal{D}}(y)$-module.  Since $\End_{\mathcal{D}}(y)$ is finite, the orbits of its action on $\Hom_{\mathcal{D}}(x, y)$ are finite as well, and generate disjoint submodules of $k\Hom_{\mathcal{D}}(x, y)$; it follows that $\Hom_{\mathcal{D}}(x, y)$ is finite, as required.  As a consequence, if $P$ is a finitely generated projective $\mathcal{D}$-representation, then $\End(P)$ is Artinian: indeed, $P$ is a summand of some finite sum of basic projectives $\oplus_d P^d$, and $\End(\oplus_d P^d) = \oplus_{d, d'} k \Hom_{\mathcal{D}}(d', d)$ by Yoneda's lemma.  

According to Krause \cite{krausekrull}, an additive category is called \textbf{Krull-Schmidt} if every object is a finite direct sum of objects having local endomorphism rings.  We have the following characterization of such categories:
\begin{lem}\cite[Cor. 4.4]{krausekrull} \label{lem:semiperfect}
An additive category is Krull-Schmidt if and only if all idempotents split and all endomorphism rings are semiperfect.
\end{lem}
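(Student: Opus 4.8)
Although this lemma is quoted from Krause, here is the argument I would give. Write $\mathcal{A}$ for the additive category in question, and recall the standard ring-theoretic characterization: a ring $R$ is semiperfect exactly when $1$ can be written as a sum $1 = e_1 + \cdots + e_n$ of pairwise orthogonal idempotents each of whose corner rings $e_i R e_i$ is local. I would also use two routine stability facts: a corner $eRe$ of a semiperfect ring $R$ is again semiperfect, and the idempotent completion (Karoubi envelope) $\hat{\mathcal{A}}$ of an additive category is additive, has split idempotents, and receives a fully faithful additive functor from $\mathcal{A}$.

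For the ``if'' direction, assume idempotents split in $\mathcal{A}$ and every endomorphism ring is semiperfect. Given an object $X$, write $1_X = e_1 + \cdots + e_n$ in $\End_{\mathcal{A}}(X)$ with the $e_i$ orthogonal and $e_i \End_{\mathcal{A}}(X) e_i$ local. Splitting each $e_i$ yields objects $X_i$ together with inclusions and projections; since the $e_i$ are orthogonal and sum to $1_X$, these assemble into a biproduct $X \simeq X_1 \oplus \cdots \oplus X_n$, and conjugation identifies $\End_{\mathcal{A}}(X_i)$ with $e_i \End_{\mathcal{A}}(X) e_i$, which is local. Hence $\mathcal{A}$ is Krull-Schmidt.

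For the ``only if'' direction, assume every object is a finite direct sum of objects with local endomorphism rings. First I would check that every $\End_{\mathcal{A}}(X)$ is semiperfect: choosing a decomposition $X = X_1 \oplus \cdots \oplus X_n$ with each $\End_{\mathcal{A}}(X_i)$ local, the projections $e_i \colon X \to X$ form a complete orthogonal family of idempotents with $e_i \End_{\mathcal{A}}(X) e_i \simeq \End_{\mathcal{A}}(X_i)$ local, which is precisely the criterion above. It then remains to show idempotents split, and for this I would pass to $\hat{\mathcal{A}}$ and prove that the embedding $\mathcal{A} \hookrightarrow \hat{\mathcal{A}}$ is essentially surjective. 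An arbitrary object of $\hat{\mathcal{A}}$ has the form $(X, e)$ with endomorphism ring $e \End_{\mathcal{A}}(X) e$, a corner of the semiperfect ring $\End_{\mathcal{A}}(X)$ and hence semiperfect; splitting its orthogonal local idempotents inside $\hat{\mathcal{A}}$ exhibits $(X,e)$ as a finite direct sum of objects with local endomorphism rings. Now $X \simeq X_1 \oplus \cdots \oplus X_n$ and $X \simeq (X,e) \oplus (X, 1-e)$ are two decompositions of $X$ in $\hat{\mathcal{A}}$ into objects with local endomorphism rings, so the Krull-Schmidt-Azumaya uniqueness theorem, applied in $\hat{\mathcal{A}}$, forces every indecomposable summand of $(X,e)$ to be isomorphic to one of the $X_i$, which lie in $\mathcal{A}$. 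Thus $(X,e)$ is in the essential image, the embedding is an equivalence, and idempotents split in $\mathcal{A}$.

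The main obstacle is this last step: splitting of idempotents does not follow formally from the decomposition hypothesis, and getting it requires the interplay of the passage-to-corners stability of semiperfectness with the Azumaya uniqueness theorem in the idempotent completion. The ``if'' direction and the semiperfectness half of the ``only if'' direction are bookkeeping once the characterization of semiperfect rings is in hand.
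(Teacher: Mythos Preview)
The paper gives no proof of this lemma: it is stated purely as a citation of \cite[Cor.~4.4]{krausekrull} and then applied. There is therefore nothing in the paper to compare your argument against.

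Your proof is correct. The ``if'' direction and the semiperfectness half of the ``only if'' direction are, as you say, bookkeeping from the orthogonal-idempotent characterization of semiperfect rings. Your treatment of idempotent splitting via the Karoubi envelope is the standard route: stability of semiperfectness under corners shows each $(X,e)$ decomposes in $\hat{\mathcal{A}}$ into local-endomorphism summands, and Azumaya uniqueness applied to the two decompositions of $X$ in $\hat{\mathcal{A}}$ forces those summands to be isomorphic to objects already in $\mathcal{A}$. One point you leave implicit is that the decomposition $X \simeq (X,e) \oplus (X,1-e)$ must refine to local-endomorphism summands on \emph{both} pieces before uniqueness can be invoked; but the same corner argument handles $(X,1-e)$, so this gap is harmless.
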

Since Artinian rings are semiperfect, the category of finitely generated projective $\mathcal{D}$-representations is Krull-Schmidt.  It follows from \cite[Lem. 3.6]{krausekrull} that every simple $\mathcal{D}$-representation $S$ has an indecomposable projective cover $P$ admitting an essential surjection $P \longrightarrow S$.  In particular, any nonzero quotient of $P$ has $S$ as a composition factor.

For every $d \in \mathcal{D}$, write $\mathscr{C}^d$ for the finite set of (necessarily indecomposable) projective covers of simple representations appearing as composition factors of $P^d$.  Similarly, write $\mathscr{T}^d$ for the finite set of indecomposable projectives that are summands of $P^d$.  Call a subset of indecomposable projectives \textbf{attainable} if it appears as a subset of $\mathscr{T}^d$ for some $d$.

Fix $d \in \mathcal{D}$.  For every attainable subset $Q \subseteq \mathscr{C}^d$, choose an object $y_Q \in \mathcal{D}$ so the projective $P^{y_Q}$ witnesses the attainability of $Q$:
$$
Q \subseteq \mathscr{T}^{y_Q}.
$$
We claim that the set $\left\{ y_Q \right\}$ satisfies the property required of $\mu(d)$, the ``joint maximum'' for the preorder $\leq_d$.  Specifically, given any $x \in \mathcal{D}$, we claim $x \leq_d y_Q$ with $Q = \mathscr{T}^x \cap \mathscr{C}^d$.  In what follows, write $y = y_Q$.

The indecomposable summands of $P^x$ come in two kinds: those appearing as projective covers of simple constituents of $P^d$, and those not appearing.  In other words, we may write $P^x = E \oplus Z$ where $E$ is a sum of projectives in the set $\mathscr{T}^x \cap \mathscr{C}^d $ and $Z$ is a sum of projectives in the set $\mathscr{T}^x \cap \overline{\mathscr{C}^d}$.  Indecomposable projectives in this second set are projective covers for simples not appearing as composition factors of $P^d$, so every map $Z \longrightarrow P^d$ is zero.  On the other hand, $\mathscr{T}^x \cap \mathscr{C}^d  = Q \subseteq \mathscr{T}^y $, and so $E$ is a summand of some finite direct sum $(P^y)^{\oplus k}$.

Note two properties of $\pi_E$, projection onto the $E$ summand of $P^x$.  First, $\pi_E$ factors
$$
\pi_E : P^x \longrightarrow E \longrightarrow (P^y)^{\oplus k} \longrightarrow E \longrightarrow P^x,
$$
and second, since $\Hom(Z, P^d) = 0$, any map $\phi : P^x \longrightarrow P^d$ satisfies $\phi \circ (1 - \pi_E) = 0$.  
By Yoneda's lemma, $\pi_E$ corresponds to some element $\omega \in (x, x)$.  The first property of $\pi_E$ ensures that $\omega \in (x, y, x)$, and the second ensures that $(1 - \omega) \in \rann{d}{(x, x)}$; it follows that $1$ is in the sum of these two ideals, and by Proposition \ref{prop:alt}, $x \leq_d y$.

We now set about proving Theorem \ref{thm:Cauchy}.  Certainly if $\Rep$ has a bounded model then its finitely generated representations have finite length.  We now show that the category $\mathcal{C}$ from Remark \ref{rem:ordinary}, is bounded and has the same representation theory as $\mathcal{D}$.  Recall that $\mathcal{C}$ was defined to be the $k$-linear category opposite to the full subcategory of $\Rep$ spanned by one representative of each indecomposable projective.

Appealing to Theorem \ref{thm:detection}, we know that $\mathcal{D}$ has finite hom-sets and that the indecomposable projective covers of simple $\mathcal{D}$-representations are enough projectives.  This gives the equivalence $\catname{Rep}_k(\mathcal{D}) \overset{\sim}{\longleftrightarrow} \catname{Rep}_{k}(\mathcal{C})$.  Similarly, we know that the basic projectives $P^d$ are finite direct sums of indecomposable projectives, and hence the $k$-module of morphisms between two indecomposable projectives $\Hom(p, p')$ is finitely generated.

In order to prove that every finitely generated $\mathcal{C}$-representation vanishes away from finitely many objects, it suffices to show that the basic projectives $\Hom_{\mathcal{C}}(p, -)$ are so bounded.  It is immediate that any particular indecomposable projective $P$ only receives nonzero maps from finitely many other indecomposable projectives $P'$, since the image of such a map must have the unique simple quotient of $P'$ as a composition factor, and $P$ only has finitely many composition factors.  It follows that in the opposite category, $\Hom_{\mathcal{C}}(p, p') = 0$ for all but finitely many $p'$, and so the claim is proved.
\end{proof}
\bibliographystyle{alphaurl}
\bibliography{references}
\end{document}